\newtheorem{thm}{Theorem}[section]
\newtheorem{lem}[thm]{Lemma}
\newtheorem{prop}[thm]{Proposition}
\newtheorem{defn}[thm]{Definition}
\newtheorem{rem}[thm]{Remark}
\newcommand{\mc} {\mathcal}
\newcommand{\md} {\mathds}
\newcommand{\ra} {\rightarrow}
\begin{document}
%
\title{Convex searches for discrete-time Zames--Falb multipliers}
%
%
%

\author{Joaquin Carrasco,~\IEEEmembership{Member,~IEEE,}
    	William P. Heath,~\IEEEmembership{Member,~IEEE,}
        Nur Syazreen Ahmad,~\IEEEmembership{Member,~IEEE,}
        Shuai Wang,~\IEEEmembership{Member,~IEEE}
        and~Jingfan Zhang,~\IEEEmembership{Student Member,~IEEE}
\thanks{J. Carrasco, W. P. Heath, and J. Zhang are with the School of Electrical and Electronic Engineering, University of Manchester, Sackville St. Building, Manchester M143 9PL, UK.  e-mail:  joaquin.carrascogomez@manchester.ac.uk, william.heath@manchester.ac.uk, jingfan.zhang@postgrad.manchester.ac.uk.}
\thanks{N. S. Ahmad is with the School Of Electrical \& Electronic Engineering,
USM, Engineering Campus,
Seberang Perai Selatan,
Nibong Tebal, Penang,
14300,
Malaysia.
  e-mail: syazreen@usm.my}
\thanks{S. Wang is a Senior Researcher with Robotics X in Tencent, High Technology Park, Nanshan District, Shenzhen,  518057,  China. e-mail: shawnshwang@tencent.com}
}

\maketitle

\begin{abstract}
In this paper we develop and analyse convex searches for Zames--Falb multipliers. We present two different approaches:  Infinite Impulse Response (IIR) and Finite Impulse Response (FIR) multipliers. The set of FIR multipliers is complete in that any IIR multipliers can be phase-substituted by an arbitrarily large order FIR multiplier. We show that searches in discrete-time for FIR multipliers are effective even for large orders. As expected, the numerical results provide the best $\ell_{2}$-stability results in the literature for slope-restricted nonlinearities. Finally, we demonstrate that the discrete-time search can provide an effective method to find suitable continuous-time multipliers.

\end{abstract}

\begin{IEEEkeywords}
Zames--Falb multipliers, absolute stability, Lur'e problem.
\end{IEEEkeywords}

%
\IEEEpeerreviewmaketitle

\section{Introduction}

The stability of a feedback interconnection between a linear time-invariant system $G$ and any nonlinearity $\phi$ within the class of nonlinearities $\Phi$ is referred to as the Lur'e problem (see Section 1.3 in~\cite{Aizerman:64} for a history of this problem). As the stability is obtained for the whole class of nonlinearities, the adjective ``absolute'' or ``robust'' is added. In the classical solution of this problem frequency-domain conditions on the linear system are determined by the class of nonlinearites. The inclusion of a multiplier reduces the conservativeness of the approach. The stability problem is translated into the search for a multiplier $M$ which belongs to the class of multipliers associated with the class of nonlinearities $\Phi$, where $G$ and $M$ satisfy some frequency conditions.

The class of Zames--Falb multipliers is defined both for the continuous-time domain~\cite{Zames:68} and for the discrete-time domain~\cite{Willems:68} (see \cite{Carrasco:16} for  a tutorial on Zames--Falb multipliers for the continuous-time domain). Loosely speaking, a Zames--Falb multiplier preserves the positivity of a monotone and bounded nonlinearity. Hence if an LTI plant $G$ is in negative feedback with a monotone and bounded nonlinearity then stability is guaranteed if there is a multiplier $M$ such that 
\begin{equation}\label{eq:0}
\mbox{Re}\{MG\}>0,
\end{equation}
with $M$ and $G$ evaluated over all frequencies (i.e. at $j\omega$, $\omega\in \mathbb{R}$ for continuous-time systems and at $e^{j\omega}$, $\omega\in[0,2\pi]$ for discrete-time systems). Similarly (and by loop tranformation) if an LTI plant $G$ is in negative feedback with an $S[0,k]$ slope-restricted nonlinearity, then stability is guaranteed if there is a multiplier $M$ such that 
\begin{equation}\label{eq:1}
\mbox{Re}\{M(1+kG)\}>0,
\end{equation}
with $M$ and $G$ evaluated over all frequencies. 
In addition a wider class of multipliers is available if the nonlinearity is odd;
multipliers for quasi-odd multipliers can also be derived \cite{Heath:15cdc}.

\subsection{Oveview of searches of Zames--Falb multipliers in Continuous-time}
To date, most of the literature on search methods for Zames-Falb multipliers has been focused on continuous-time systems, where three types of method have been developed:
\paragraph{Finite Impulse Response (FIR)}
Searches over sums of Dirac delta functions are proposed and developed in~\cite{Safonov87},~\cite{Gapski:94} and~\cite{Chang:2012}. The main advantage of this method is the simplicity and versatility of using impulse responses for the multiplier. However the searches require a sweep over all  frequencies, which can lead to unreliable results in some cases~\cite{Carrasco:2012a}. Moreover, the choice of times for the Dirac delta functions is heuristic.
\paragraph{Basis functions}In~\cite{chen95a} and~\cite{Chen:96} it is proposed to parameterise the multiplier in terms of causal basis functions $e_i^+(t)=t^ie^{-t}u(t)$ where $u(t)$ is the unit (or Heaviside) step function, and anticausal basis functions $e_i^-(t)=t^ie^tu(-t)$, with $i=1,\ldots,N$ for some $N$. As an advantage over the FIR method, the positivity of $M(1+kG)$ can be tested through the KYP lemma.  
Moreover the search provides a complete search over the class of rational multipliers as $N$ approaches infinity~\cite{Veenman:13}. The method provided significant advantages, such as the combination with other nonlinearities~\cite{Veenman:16}. Nonetheless if $N$ is required to be large then the search becomes numerically ill-conditioned. With small $N$ there is conservatism for odd nonlinearities, since the impulse of the multiplier is allowed to change sign. 
In fact the results reported in \cite{chen95a} for SISO examples are not significantly better for odd nonlinarities than for non-odd.
\paragraph{Restricted structure rational multipliers} In~\cite{Turner:2009} an LMI method is proposed where the $\mc{L}_1$ norm of a low-order causal multiplier is bounded in a convex manner (see also~\cite{Carrasco:2012}). 
Several extensions have been proposed: adding a Popov multiplier~\cite{Turner:11}, developing an anticausal counterpart~\cite{Carrasco:2012a}, and increasing the order of the multiplier~\cite{Turner:13}. The method is  quasi-convex and effective but does not provide a complete search. It has  two further drawbacks: the bound of the $\mc{L}_1$-norm may be conservative and it can only be applied if the nonlinearity is odd. 
\vskip3mm

In~\cite{Carrasco:14,Carrasco:16}, it has been shown that their relative performances vary with different examples. It must be highlighted that results in the basis functions can be significantly improved by manually selecting the parameters of the basis~\cite{Fetzer:17a,Tugal:17}. Similarly, manual tuning of delta functions can be useful for time-delay systems~\cite{Zhang:18b}.

In addition, there are several other stability tests in the literature, where either the Zames--Falb multipliers are not explicitly invoked or extensions to the Zames--Falb multipliers are proposed. These can all be viewed as searches over subclasses of Zames--Falb multipliers \cite{Carrasco:13,Carrasco:14}. In particular, the Off-Axis Circle Criterion is a powerful technique that uses graphical tools to ensure the existence of a possibly high-order multiplier by using graphical methods~\cite{Cho:68}, hence avoiding the use of an optimization tool. It can be used to establish a large set of plants that satisfy the Kalman conjecture~\cite{Tesi:92,Zhang:18}.

\subsection{Zames--Falb multipliers in Discrete-time domain}
In~\cite{Willems:68,Willems:71}, the discrete-time counterparts of the Zames--Falb multipliers~\cite{Zames:68} are given. The conditions are the natural counterparts to the continuous-time case, where the $\mathcal{L}_1$-norm is replaced by the $\ell_1$-norm and the frequency-domain inequality must be satisfied on the unit circle. In the continuous-time case, the use of improper multipliers has generated ``extensions'' of the original that have been analysed in~\cite{Carrasco:13,Carrasco:14}. In the discrete-time case, the conditions for the Zames--Falb multipliers are necessary and sufficient to preserve the positivity of the nonlinearity~\cite{Willems:71}; it follows that  the class of Zames--Falb multipliers is the widest class of multipliers that can be used. The result has been extended to MIMO systems~\cite{Safonov:00}, repeated nonlinearities in~\cite{Kulkarni:2002} and MIMO repeated nonlinearities in~\cite{Mancera:2004}. These works are focused on the description of the available multipliers, but no explicit search method is discussed.

Modern digital control implementation requires a complete study in the discrete-time domain. In addition the possibility of using the Zames--Falb multipliers for studying the stability and robustness properties of input-constrained model predictive control (MPC)~\cite{Heath:07} provides an inherent motivation for discrete-time analysis, since MPC is naturally  formulated in discrete time. Recently, Zames--Falb multipliers in discrete-time have been attracting attention in their use to ensure convergence rates of optimization algorithms~\cite{Lessard:16,Freeman:18}.

More generally, the absolute stability problem of  discrete-time Lur'e systems with slope--restricted nonlinearities continues to attract attention. Recent studies include~\cite{Gonzaga:2012,Ahmad:13j,Ahmad:15, Park:2018} which all take a  Lyapunov function approach; as an advantage they generate easy-to-check Linear Matrix Inequality (LMI) conditions. However one might expect  that improved results could be obtained via a multiplier approach, since this provides a more general condition. In fact some of these approaches can be interpreted as a search over a small subclass of Zames--Falb multipliers; see~\cite{Ahmad:15} for further details. Although this paper deals with SISO systems, it must be highlighted that a tractable stability test using Zames--Falb multipliers for MIMO nonlinearities has been proposed in~\cite{Fetzer:17b}. 

The differences between continuous-time and discrete-time Lur'e systems are non-trivial. As an example, second-order counterexamples to the discrete-time Kalman conjecture have been found \cite{Carrasco:15,Heath:15}. For continuous-time systems the Kalman conjecture holds for first, second, and third order plants~\cite{Barabanov88}. This is reflected by phase restrictions that can be placed on discrete-time Zames--Falb multipliers that are different in kind to their continuous-time counterparts \cite{Wang:18}.

In this paper we propose several searches for SISO LTI discrete-time Zames--Falb multipliers.   
The search of multipliers can be carried out with two different approaches:
\paragraph{Infinite impulse response (IIR) multiplier} The search is the counterpart of the method proposed by Turner et al. \cite{Turner:2009,Carrasco:2012a}, presented in~\cite{Ahmad:13} and included for the sake of completeness. The multipliers are parametrised in terms of their state-space representation, and classical multiobjective techniques are used to produce an LMI search.
\paragraph{Finite impulse response (FIR) multiplier} This search can be considered as the counterpart of both Safonov's and Chen and Wen's methods (\cite{Safonov87,Chen:96}). Initial results were presented in~\cite{Wang:14}. Here, two alternative versions are provided: firstly we propose an \emph{ad hoc} factorization which leads to a hard-factorization of the multiplier; secondly we use standard lifting techniques, e.g.~\cite{Hosoe:13}, whose factorization need not be hard but can provide other advantages.   
\vskip3mm

Numerical results and some computational consideration are discussed in Section V. 
In Section VI we consider how the discrete-time FIR search may be used effectively to find continuous-time multipliers. We show by numerical examples that tailoring the method can match or beat searches proposed in the literature for rational transfer functions. 

We must highlight that discrete-time Zames--Falb multipleirs have been defined as LTV operators~\cite{Willems:68}. However, we reduce our attention to LTI Zames--Falb multiplier. In the spirit of~\cite{Carrasco:13}, it remains open whether the restriction to LTI Zames--Falb multiplier can be made without loss of generality when $G$ is an LTI system. Moreover we have conjectured that if there is no suitable Zames--Falb multiplier for a plant $G$ and gain $k$ smaller than its Nyquist gain (see Section II for a definition), then there exists a slope-restricted nonlinearity in $[0,k]$ such that the feedback interconnection between $G$ and the nonlinearity is unstable~\cite{Wang:18}. However, further work is required to prove or disprove these conjectures.


\section{Notation and Preliminary results}


Let $\md{Z}$ and $\md{Z_+}$ be the set of integer numbers and positive integer numbers including $0$, respectively. Let $\ell$ be the space of all real-valued sequences, $h:\md{Z}_+\ra\md{R}$.  Let $\ell_1(\md{Z})$ be the space of all absolute summable sequences, so given a sequence $h:\md{Z}\ra\md{R}$ such that $h\in\ell_1$, then its $\ell_1$-norm is
\begin{equation}\label{eq:2}
\|h\|_1=\sum_{k=-\infty}^{\infty}|h_k|,
\end{equation}
where  $h_k$ means the $k$th element of $h$.
In addition, let $\ell_{2}$ denote the Hilbert space of all square-summable real sequences $f:\md{Z}_+\rightarrow\md{R}$ with the inner product defined as
\begin{equation}\label{eq:3}
\langle f, g\rangle=\sum_{k=0}^\infty f_k g_k,
\end{equation}
for $f,\ g\in\ell_2$, $k\in\md{Z}_+$. Similarly, we can define the Hilbert space $\ell_2(\md{Z})$ by considering real sequences $f:\md{Z}\rightarrow\md{R}$. We use $0_i$ to denote a row vector with $i$ entries, all equal to zero. Similarly $0$ denotes a matrix with zero entries where the dimension is obvious from the context. We use $I_i$ to denote the $i\times i$ identity matrix.

The standard notation $\mathbf{RL}_\infty$ is used for the space of all real rational transfer functions  with no poles on the unit circle. If $G\in\mathbf{RL}_\infty$, its norm is defined as $\|G\|_\infty=\sup_{|z|=1}|G(z)|$. Furthermore $\mathbf{RH}_\infty$ is used for the space of all real rational transfer functions with all poles strictly inside the unit circle. Similarly, $\mathbf{RH}_\infty^{-}$ is used for the space of all real rational transfer functions with all poles strictly outside the unit circle.  With some reasonable abuse of the notation, given a rational transfer function $H(z)$ analytic on the unit circle, $\|H\|_1$ means the $\ell_1$-norm of impulse response of $H(z)$.

Let $\bar{M}$ denote a linear time invariant operator mapping a time domain input signal to a time domain output signal and let $M$ denote the corresponding transfer function. We consider that the domain of convergence includes the unit circle, so that the  $\ell_1$-norm of the inverse z-transform of $M$ is bounded if $M\in\mathbf{RL}_\infty$. We say the multiplier $\bar{M}$ is causal if $M\in\mathbf{RH}_\infty$, $\bar{M}$ is anticausal if $M\in\mathbf{RH}_\infty^{-}$, and $\bar{M}$ is noncausal otherwise. See~\cite{Dahleh:95} for further discussion on causality and stability. Henceforth, we will use $M$ for both the operator and its transfer function.

%


A discrete LTI causal system $G$ has the state space realization of ($A$, $B$, $C$, $D$). That is to say, assuming the input and output of $G$ at sample~$k$ are $u_k$ and $y_k$, respectively, and the inner state is denoted as $x_k$, the following relationship is satisfied 
\begin{equation}\label{eq:5}
G:\begin{cases}
x_{k+1}=Ax_k+Bu_k, & \\
y_k=Cx_k+Du_k, & 
\end{cases}
\end{equation}
in short
\begin{equation}
G \sim \left [
\begin{array}{c|c}
A & B\\ \hline
C & D
\end{array}
\right ].
\end{equation}
Its transfer function is given by $G(z)=C(zI-A)^{-1}B+D$, where $z$ is the z-transform of the forward (or left) shift operator. In fact, this notation is not always adopted in the literature since the definition of the z-transform is not uniform in the use of $z$ or $z^{-1}$. See~\cite{Dahleh:95,Young:88}.

The discrete-time version of the KYP lemma will be used to transfer  frequency domain inequalities into LMIs:
\begin{lem}\label{lem:kyp}
	{\it (Discrete KYP lemma, \cite{Rantzer:96})}
	Given $A$, $B$, $M$, with $\det (e^{j\omega}I-A)\neq0$ for $\omega\in\mathds{R}$ and the pair $(A,B)$ controllable, the following two statements are equivalent:
	\begin{enumerate}
		\item[(i)]\begin{align}
		\begin{bmatrix}
		(e^{j\omega}I-A)^{-1}B  \\
		I
		\end{bmatrix}^*M
		\begin{bmatrix}
		(e^{j\omega}I-A)^{-1}B  \\
		I
		\end{bmatrix} \leq 0.
		\end{align}
		\item[(ii)] There is a matrix $X\in\mathds{R}^{n\times n}$ such that $X=X^\top$ and
		\begin{align}
		M+\begin{bmatrix}
		A^\top XA-X      &   A^\top XB      \\
		B^\top XA    &  B^\top XB	
		\end{bmatrix}\leq0.	
		\end{align}
	\end{enumerate}
	The corresponding equivalence for strict inequalities holds even if the pair $(A,B)$ is not controllable. 
\end{lem}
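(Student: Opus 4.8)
The plan is to prove the two implications separately. The direction $(ii)\Rightarrow(i)$ is a direct computation: fix $\omega$ with $z=e^{j\omega}$ and $\det(zI-A)\neq0$, set $\Phi=(zI-A)^{-1}B$ so that $A\Phi+B=z\Phi$, and congruence-transform the inequality in (ii) by $\begin{bmatrix}\Phi\\ I\end{bmatrix}$. The appended block contributes $(A\Phi+B)^*X(A\Phi+B)-\Phi^*X\Phi=(|z|^2-1)\Phi^*X\Phi$, which vanishes since $|z|=1$; what remains is exactly the left-hand side of (i). This step needs neither controllability nor strictness and transfers verbatim to the strict case.

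The hard direction $(i)\Rightarrow(ii)$ I would handle by convex duality in the style of Rantzer. Working in the real space of Hermitian matrices with inner product $\langle S,T\rangle=\mathrm{tr}(ST)$, let $\mc{P}$ denote the subspace of matrices $\begin{bmatrix}A^\top XA-X & A^\top XB\\ B^\top XA & B^\top XB\end{bmatrix}$ with $X=X^\top$, and let $\mathbb{S}_+$ be the positive-semidefinite cone. Statement (ii) says precisely that $-M\in\mc{P}+\mathbb{S}_+$. Because $\mc{P}$ is a subspace and $\mathbb{S}_+$ is self-dual, the dual cone of $\mc{P}+\mathbb{S}_+$ is $\mc{P}^\perp\cap\mathbb{S}_+$, so (modulo closedness of $\mc{P}+\mathbb{S}_+$) the separating-hyperplane theorem reduces (ii) to showing $\langle M,W\rangle\le0$ for every $W\in\mc{P}^\perp\cap\mathbb{S}_+$. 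A short trace calculation identifies $\mc{P}^\perp$ as the set of Hermitian $W$ obeying the Stein-type identity $\begin{bmatrix}A & B\end{bmatrix}W\begin{bmatrix}A & B\end{bmatrix}^*=W_{11}$, with $W_{11}$ the leading $n\times n$ block of $W$.

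The crux, and what I expect to be the main obstacle, is a losslessness lemma: every $W\in\mc{P}^\perp\cap\mathbb{S}_+$ decomposes as $W=\sum_i v_iv_i^*$ with each $v_i=\begin{bmatrix}x_i\\ u_i\end{bmatrix}$ satisfying $Ax_i+Bu_i=e^{j\omega_i}x_i$ for some $\omega_i$. Granting this, each $v_i$ is a constraint vector of exactly the form appearing in (i), so $v_i^*Mv_i\le0$ and hence $\langle M,W\rangle=\sum_i v_i^*Mv_i\le0$, closing the argument. I would prove the lemma by induction on $\mathrm{rank}(W)$: the identity $\begin{bmatrix}A & B\end{bmatrix}W\begin{bmatrix}A & B\end{bmatrix}^*=W_{11}$ lets me extract from $\mathrm{range}(W)$ a vector $v$ with $\begin{bmatrix}A & B\end{bmatrix}v=\lambda\Pi v$, where $\Pi=\begin{bmatrix}I_n & 0\end{bmatrix}$ and $|\lambda|=1$; subtracting a maximal $\epsilon vv^*$ preserves both $W\succeq0$ and membership in $\mc{P}^\perp$ while dropping the rank, and I iterate. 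The delicate point is to guarantee that a unit-modulus generalized eigenvector really exists in $\mathrm{range}(W)$ at each stage, and it is here that controllability of $(A,B)$ is used.

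Finally I would clear up the two technical loose ends. First, the closedness of $\mc{P}+\mathbb{S}_+$ must be checked so that the separation argument yields membership in $\mc{P}+\mathbb{S}_+$ itself rather than only in its closure. Second, the strict version: here the controllability hypothesis can be dropped because strictness is an open condition, so one perturbs $(A,B)$ to a nearby controllable pair, applies the non-strict equivalence with a uniform margin, and passes to the limit in a suitably normalized $X$. I expect the decomposition lemma to absorb essentially all the difficulty; once it is in place, the identity in the easy direction and the duality bookkeeping are routine.
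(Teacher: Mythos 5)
The paper never proves this lemma: it is imported verbatim from \cite{Rantzer:96}, so the only meaningful benchmark is Rantzer's own duality proof, and that is precisely the strategy you have reconstructed. Your (ii)$\Rightarrow$(i) congruence computation is correct, and the reduction of (i)$\Rightarrow$(ii) to a separation argument over $\mc{P}^\perp\cap\mathbb{S}_+$ is the right skeleton.

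There are, however, genuine gaps, and the first is structural: you have attached the controllability hypothesis to the decomposition lemma, where it is not needed, while dismissing as a ``technical loose end'' the closedness of $\mc{P}+\mathbb{S}_+$, which is exactly where controllability is indispensable. For the decomposition, no rank induction and no controllability are required in discrete time: factor $W=\begin{bmatrix}F\\ G\end{bmatrix}\begin{bmatrix}F\\ G\end{bmatrix}^*$; the constraint $\begin{bmatrix}A & B\end{bmatrix}W\begin{bmatrix}A & B\end{bmatrix}^*=W_{11}$ reads $(AF+BG)(AF+BG)^*=FF^*$, hence $AF+BG=FU$ for some unitary $U$; diagonalizing $U=\sum_i\lambda_iq_iq_i^*$ with $|\lambda_i|=1$ and setting $v_i=\begin{bmatrix}Fq_i\\ Gq_i\end{bmatrix}$ gives $W=\sum_iv_iv_i^*$ with $Ax_i+Bu_i=\lambda_ix_i$, where $\lambda_iI-A$ is invertible by the spectral hypothesis on $A$, not by controllability. (A smaller, fixable slip: since $X$ ranges over \emph{real} symmetric matrices, your trace computation only forces the real part of $\begin{bmatrix}A & B\end{bmatrix}W\begin{bmatrix}A & B\end{bmatrix}^*-W_{11}$ to vanish; symmetrize $W\mapsto\tfrac12(W+\bar W)$ first, which is harmless because $A$, $B$, $M$ are real.) By contrast, closedness of $\mc{P}+\mathbb{S}_+$ genuinely fails without controllability, and with it the whole non-strict equivalence: take $n=m=1$, $B=0$, $|A|<1$ and $M=\begin{bmatrix}1 & 1\\ 1 & 0\end{bmatrix}$. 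Condition (i) holds because the frequency vector degenerates to $\begin{bmatrix}0\\ 1\end{bmatrix}$ and $M_{22}=0$; every $W\in\mc{P}^\perp\cap\mathbb{S}_+$ has $W_{11}=0$, hence $W_{12}=0$ and $\langle M,W\rangle=0$, so your dual criterion is met; yet (ii) is false, since a negative semidefinite matrix with a zero diagonal entry cannot have a nonzero off-diagonal entry in that row. Here $-M$ lies in the closure of $\mc{P}+\mathbb{S}_+$ but not in $\mc{P}+\mathbb{S}_+$ itself. So a proof along your lines stands or falls with a controllability-based closedness argument, which your outline does not contain --- that, not the decomposition, is where the difficulty of the lemma is concentrated. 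Finally, the strict case as you sketch it is also incomplete: the solutions $X_\delta$ produced for nearby controllable pairs $(A,B_\delta)$ need not remain bounded as $\delta\to0$, and after normalization the term $M$ disappears from the limiting inequality; one needs either a uniform bound on $X_\delta$ or a different reduction, e.g.\ augmenting the input matrix to $\begin{bmatrix}B & \epsilon I\end{bmatrix}$, which is controllable for every $\epsilon\neq0$.
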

Throughout this paper, the superscript $^*$ stands for conjugate transpose.

\begin{rem}
	State space representations such as~\eqref{eq:5} are appropriate for causal systems, but not for anticausal and noncausal systems. These can be represented in state space as  descriptor systems. 
	The KYP lemma has been extended to descriptor systems in~\cite{Camlibel} for continuous-time LTI systems. In \cite{Mustapha} an approach to the analysis of discrete singular systems is presented; however it is restricted to causal systems. 
	In this work we exploit the structure of our multipliers to find causal systems that have the same frequency response on the unit circle. Hence the classical KYP lemma suffices.
\end{rem}

The discrete-time Lur'e system is represented in Fig.~\ref{mp1}. The interconnection relationship is
\begin{equation}\label{eq:4}
\begin{cases}
v_k=f_k+(Gw)_k,  & \\
w_k=-\phi (v_k)+g_k. & 
\end{cases}
\end{equation}
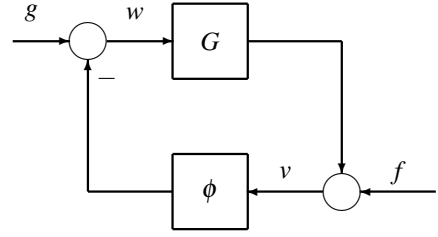
\begin{figure}[t!]
	\centering
	\ifx\JPicScale\undefined\def\JPicScale{1}\fi
	\unitlength \JPicScale mm
	\begin{picture}(55,30)(0,0)
	\linethickness{0.3mm}
	\put(-1.25,25){\line(1,0){7.5}}
	\put(6.25,25){\vector(1,0){0.12}}
	\linethickness{0.3mm}
	\put(8.75,5){\line(1,0){11.25}}
	\linethickness{0.3mm}
	\put(8.75,5){\line(0,1){17.5}}
	\put(8.75,22.5){\vector(0,1){0.12}}
	\linethickness{0.3mm}
	\put(11.25,25){\line(1,0){8.75}}
	\put(20,25){\vector(1,0){0.12}}
	\linethickness{0.3mm}
	\put(30,25){\line(1,0){12.5}}
	\linethickness{0.3mm}
	\put(42.5,7.5){\line(0,1){17.5}}
	\put(42.5,7.5){\vector(0,-1){0.12}}
	\linethickness{0.3mm}
	\put(30,5){\line(1,0){10}}
	\put(30,5){\vector(-1,0){0.12}}
	\linethickness{0.3mm}
	\put(42.5,5){\circle{5}}
	
	\linethickness{0.3mm}
	\put(45,5){\line(1,0){10}}
	\put(45,5){\vector(-1,0){0.12}}
	\put(50,7.5){\makebox(0,0)[cc]{$f$}}
	
	\put(35,7.5){\makebox(0,0)[cc]{$v$}}
	
	\put(15,28.75){\makebox(0,0)[cc]{$w$}}
	
	\linethickness{0.3mm}
	\put(20,30){\line(1,0){10}}
	\put(20,20){\line(0,1){10}}
	\put(30,20){\line(0,1){10}}
	\put(20,20){\line(1,0){10}}
	\linethickness{0.3mm}
	\put(20,10){\line(1,0){10}}
	\put(20,0){\line(0,1){10}}
	\put(30,0){\line(0,1){10}}
	\put(20,0){\line(1,0){10}}
	\put(25,5){\makebox(0,0)[cc]{$\phi$}}
	
	\put(25,25){\makebox(0,0)[cc]{$G$}}
	
	\linethickness{0.3mm}
	\put(8.75,25){\circle{5}}
	
	\put(11.25,20){\makebox(0,0)[cc]{$-$}}
	
	\put(1.25,28.75){\makebox(0,0)[cc]{$g$}}
	
	\end{picture}
	
	\vskip-2mm
	\caption{Lur'e problem}\label{mp1}
	\vskip-6mm
\end{figure}

The system~\eqref{eq:4} is well-posed if the map $(v,w)\mapsto(g,f)$ has a causal inverse on $\ell\times\ell$, and this feedback interconnection is $\ell_{2}$-stable if for any $f,g\in\ell_2$, both $w,v\in\ell_{2}$.

The memoryless nonlinearity $\mathbf{\phi}:\md{R}\mapsto\md{R}$ with $\phi(0)=0$ is said to be bounded if there exists $C$ such that $|\phi(x)|<C|x|$ for all $x\in\md{R}$ and $\phi$ is said to be monotone if for any two real numbers $x_1$ and $x_2$ then
\begin{equation}\label{eq:12}
0\leq\frac{\phi(x_1)-\phi(x_2)}{x_1-x_2}.
\end{equation}
Moreover, $\phi$ is slope-restricted in the interval $S[0,K]$, henceforth $\phi_K$, if
\begin{equation}\label{eq:11}
0\leq\frac{\phi_K(x_1)-\phi_K(x_2)}{x_1-x_2}\leq K,
\end{equation}
for all $x_1\neq x_2$. Finally, the nonlinearity $\phi$ is said to be odd if $\phi(x)=-\phi(-x)$ for all $x\in\md{R}$.


%
%

Zames--Falb multipliers preserve the positivity of the class of monotone nonlinearities~\cite{Zames:68,Willems:68}. Then a loop transformation allows us to obtain the following result for slope restricted nonlinearities: 

\begin{thm}
	[\cite{Willems:71,Willems:68}]\label{Thrm:1} 
	Consider the feedback system in Fig.~\ref{mp1} with $G\in\mathbf{RH}_\infty$, and $\phi$ is a slope-restricted in $S[0,K]$. Suppose that there exists a multiplier $M:\ell_2(\md{Z})\mapsto\ell_2(\md{Z})$ whose impulse response is $m:\md{Z}\mapsto\md{R}$ and satisfies $\sum_{k=-\infty}^\infty |m_k|<2m_0$, 
	\begin{equation}\label{eq:14}
	\hbox{Re}\left\{M(z)(1+KG(z))\right\}>0 \quad \forall |z|=1,
	\end{equation}
	and either $m_k\leq0$ for all $k\neq0$ or $\phi$ is also odd. Then the feedback interconnection~\eqref{eq:4} is $\ell_2$-stable.  
\end{thm}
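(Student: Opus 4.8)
The plan is to reduce everything to an a priori $\ell_2$ bound on the nonlinearity output $p=\phi(v)$. Writing $\eta=f+Gg$ and using $w=g-p$, the interconnection~\eqref{eq:4} gives $v=\eta-Gp$, hence $Gp=\eta-v$. Two ingredients drive the argument: a time-domain positivity inequality that any Zames--Falb multiplier satisfies against a monotone nonlinearity, and the frequency inequality~\eqref{eq:14}, which via Parseval supplies coercivity. The slope restriction is carried by the complementary map $\psi(x)=Kx-\phi(x)$, which is itself monotone (and odd when $\phi$ is odd) precisely because $\phi\in S[0,K]$; note $\phi+\psi=K\,\mathrm{Id}$ and that $\phi(v)\in\ell_2$ since $|\phi(x)|\le K|x|$.

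First I would establish the core lemma: for monotone $\theta$ with $\theta(0)=0$ and any $u\in\ell_2$ with $\theta(u)\in\ell_2$, a multiplier satisfying the hypotheses of Theorem~\ref{Thrm:1} obeys $\langle\theta(u),Mu\rangle\ge0$. Expanding with the impulse response $m$, this equals $m_0C_0+\sum_{k\neq0}m_kC_k$ where $C_k=\sum_t\theta(u_t)u_{t-k}$ and $C_0=\langle\theta(u),u\rangle\ge0$. The heart is the correlation bound $C_k\le C_0$, which follows from convexity of the primitive $\Theta(x)=\int_0^x\theta$ together with shift invariance of the $\ell_1$-summable sequence $\Theta(u)$: the pointwise inequality $\theta(u_t)(u_t-u_{t-k})\ge\Theta(u_t)-\Theta(u_{t-k})$ sums to zero. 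When $\theta$ is odd one upgrades this to $|C_k|\le C_0$ by the same argument applied to $-u_{t-k}$ and the evenness of $\Theta$. In the case $m_k\le0$ the bound $C_k\le C_0$ yields $m_0C_0+\sum_{k\neq0}m_kC_k\ge\big(m_0-\sum_{k\neq0}|m_k|\big)C_0\ge0$; in the odd case $|C_k|\le C_0$ gives the same conclusion. Here $m_0>\sum_{k\neq0}|m_k|$ is exactly the hypothesis $\sum_k|m_k|<2m_0$.

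With the lemma in hand I would assemble the estimate. Since $(1+KG)p=p+K(\eta-v)=K\eta-(Kv-p)=K\eta-\psi(v)$, applying~\eqref{eq:14} (strict, hence $\ge\epsilon>0$ on the compact unit circle) to the signal $p$ and invoking Parseval gives
\begin{equation*}
\epsilon\|p\|^2\le\langle p,M(1+KG)p\rangle=K\langle p,M\eta\rangle-\langle\phi(v),M\psi(v)\rangle.
\end{equation*}
It then suffices to show the cross term $\langle\phi(v),M\psi(v)\rangle$ is nonnegative, and I would obtain this from the core lemma by the monotone reparametrisation $\theta=\phi\circ\psi^{-1}$: with $u=\psi(v)$ one has $\theta(u)=\phi(v)$, so $\langle\phi(v),M\psi(v)\rangle=\langle\theta(u),Mu\rangle\ge0$, where $\theta$ is monotone as a composition of monotone maps and odd whenever $\phi$ is. Dropping the nonnegative cross term leaves $\epsilon\|p\|^2\le K\langle p,M\eta\rangle\le K\|m\|_1\,\|\eta\|\,\|p\|$, whence $\|p\|\le(K/\epsilon)\|m\|_1\,\|\eta\|$; then $v=\eta-Gp$ and $w=g-p$ lie in $\ell_2$ with norms controlled by $\|f\|$ and $\|g\|$, which is the asserted $\ell_2$-stability.

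The main obstacle is rigour on two fronts. First, $M$ is noncausal, so one cannot simply presume $p,v\in\ell_2$; the inequalities must be run on truncated signals and the bound passed to the limit using well-posedness and the strict margin $\epsilon$, which is exactly where causality and anticausality enter. Second, the reparametrisation $\theta=\phi\circ\psi^{-1}$ needs $\psi$ strictly increasing, whereas for $\phi$ whose slope attains $0$ or $K$ on an interval $\psi^{-1}$ is multivalued; I would close this either by approximating $\phi$ by nonlinearities with slope in $(0,K)$ and taking limits, or by proving $\langle\phi(v),M\psi(v)\rangle\ge0$ directly from the co-monotone correlation inequality $\sum_t\phi(v_t)\psi(v_{t-k})\le\sum_t\phi(v_t)\psi(v_t)$, whose proof via a layer-cake decomposition of the two monotone maps is the genuinely delicate step.
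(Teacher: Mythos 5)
Your proposal cannot be compared against a proof in the paper because the paper gives none: Theorem~\ref{Thrm:1} is imported from \cite{Willems:71,Willems:68}. Measured against the classical argument of those references, your sketch reconstructs its main lines correctly. The ``core lemma'' $\langle\theta(u),Mu\rangle\ge 0$, proved via the correlation bounds $C_k\le C_0$ (and $|C_k|\le C_0$ in the odd case) using convexity of the primitive $\Theta$ and shift invariance, is exactly the Willems--Brockett rearrangement inequality; the loop-transformation algebra $(1+KG)p=K\eta-\psi(v)$, the Parseval coercivity with margin $\epsilon$, and the closing norm estimates are all sound. Your second flagged obstacle (non-invertibility of $\psi$ when the slope of $\phi$ attains $0$ or $K$) is real but benign: both of your fixes work, and the direct co-monotone correlation inequality $\sum_t\phi(v_t)\psi(v_{t-k})\le\sum_t\phi(v_t)\psi(v_t)$ is precisely what \cite{Willems:68} establishes, so that route is the historically correct one.

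The genuine gap is your first flagged obstacle, which you name but do not resolve, and whose naive resolution fails. Every inequality in your chain is run under the standing assumption $p=\phi(v)\in\ell_2(\md{Z})$, which is what stability is supposed to deliver; worse, for noncausal $M$ the signal $Mp$ is not even well defined when $p$ lies only in the extended space, since the anticausal coefficients of $m$ must sum the entire future of $p$. Your proposed repair---``run the inequalities on truncated signals and pass to the limit''---does not go through as stated. Taking $p^T=P_Tp$ (truncation at time $T$), which is legitimately in $\ell_2$, the coercivity and cross-term steps do survive, because truncation commutes with static nonlinearities and causality of $G$ gives $P_T(1+KG)p^T=KP_T\eta-\psi(P_Tv)$. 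But one is left with the spillover term $\langle p^T,\,M(I-P_T)(1+KG)p^T\rangle$, generated by the anticausal part of $M$ acting on the response of $KG$ after time $T$ to the input $p^T$. This term has no sign and is bounded only by a constant of order $K\|G\|_\infty\sum_{j\ge1}|m_{-j}|$ times $\|p^T\|^2$; it is not $o(\|p^T\|^2)$, so the margin $\epsilon$ cannot absorb it under the stated hypotheses. Closing this hole is the actual hard content of the theorem and needs an idea absent from your sketch: a canonical (Wiener--Hopf type) factorization $M=M_-M_+$ with $M_+,\,M_+^{-1}$ causal and $M_-,\,M_-^{-1}$ anticausal through which the loop is transformed \emph{before} truncating (the route of \cite{Willems:71}), or the IQC homotopy machinery, or Zames and Falb's original bespoke estimates. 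It is no accident that the paper itself, in Section IV, stresses which of its LMI formulations yield \emph{hard} factorizations ($X>0$): that is exactly the property that licenses truncation arguments. As it stands, your argument proves only the a priori statement ``if the loop signals are in $\ell_2$, then their norms are controlled by the inputs,'' not $\ell_2$-stability itself.
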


The above theorem leads to the definition of the class of Zames--Falb multipliers:
\begin{defn}\label{def:zfm}
	{\it (DT LTI Zames--Falb multipliers \cite{Willems:68})} The class of discrete-time SISO LTI Zames--Falb multipliers contains all LTI convolution operators $M:\ell_2(\md{Z})\mapsto\ell_2(\md{Z})$ whose impulse response is $m:\md{Z}\mapsto\md{R}$ satisfies $\sum_{k=-\infty}^\infty |m_k|<2m_0$. Without loss of generality, the value of $m_0$ can be chosen to be 1.  
\end{defn}
\begin{rem}
An important subclass of Zames--Falb multipliers is obtained by adding the limitation $m_k\leq0$, which must be used if we only have information about slope-restriction of the nonlinearity.
\end{rem}
\begin{rem}
	It is also standard to write Definition~\ref{def:zfm} using the $\ell_1$-norm by stating the condition as $\|M\|_1<2$.
\end{rem}

\begin{defn}
	{\it (Nyquist value)} Given $G\in\mathbf{RH}_\infty$, the Nyquist value $k_N$ is the supremum of all the positive real numbers $K$ such that $\tau K G(z)$ satisfies the Nyquist Criterion for all $\tau\in[0,1]$. It can also be expressed as:
	\begin{equation}
	k_N=\sup\{K \in \mathds{R}^+:\inf_\omega\{|1+\tau kG(e^{j\omega})|\}>0),\forall \tau \in[0,1]\}.
	\end{equation}
	In terms of its state space realization~\eqref{eq:5}, $k_N$ is the supremum of $K$ such that all eigenvalues of ($A-BKC$)  are located in the open unit disk, with $K$ in the interval $[0,k_N]$.
\end{defn}

\begin{rem}
	The Kalman conjecture is not valid for discrete-time systems even for plants of order~2 \cite{Carrasco:15,Heath:15}. There is no {\em a priori} guarantee (except for first order systems) that if $k$ is less than the Nyquist value for the plant then the negative feedback interconnection of the plant and a nonlinearity slope-restricted in $S[0,k]$ is stable.
\end{rem}


\section{Searches for IIR Multipliers}


In~\ref{sec:res01} we present a search for discrete-time causal multipliers that is the counterpart to the search for continuous-time causal multipliers presented in \cite{Turner:2009} (see also \cite{Carrasco:2012}). In Section~\ref{sec:res02} we present the anticausal counterpart, similar in spirit to the continuous-time anticausal search of~\cite{Carrasco:2012a}. The results in this section were fully presented in~\cite{Ahmad:13}, so proofs are omitted.



When the multiplier is parameterised in terms of its state-space representation as in \cite{Turner:2009,Carrasco:2012}, we require the following bound \cite{Rieber:2008} for all the searches.
\begin{lem}[\cite{Rieber:2008}] \label{lem:1} Consider a dynamical system $G$ represented by 
	(\ref{eq:5}) and $x_0=0$. Suppose that there exist $\mu>0$, $0<\lambda<1$ and $P=P^\top$ such that
	\begin{equation}
	\begin{bmatrix}
	A^\top P A-\lambda P & A^\top P B \\
	\star                & B^\top P B-\mu I  \\
	\end{bmatrix}<0,
	\end{equation}
	\begin{equation}
	\begin{bmatrix}
	(\lambda-1) P+ C^\top C  & C^\top D \\
	\star       & (\mu-\gamma^2)I+D^\top D \\
	\end{bmatrix}< 0.
	\end{equation}
	Then $\|G\|_1<\gamma$. Furthermore, $A$ has all its eigenvalues in the open unit disk. 
\end{lem}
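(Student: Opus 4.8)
The plan is to read the two matrix inequalities as exponential dissipation inequalities and then to exploit the fact that, for a SISO system, the $\ell_1$-norm of the impulse response coincides with the induced $\ell_\infty$-to-$\ell_\infty$ (peak) gain $\sup_{\|u\|_\infty\le1}\|y\|_\infty$. First I would perform the standard congruence: multiplying the first LMI on the left by $[x_k^\top\ u_k^\top]$ and on the right by its transpose, and substituting $x_{k+1}=Ax_k+Bu_k$ from~\eqref{eq:5}, produces the dissipation inequality $x_{k+1}^\top P x_{k+1}<\lambda\,x_k^\top P x_k+\mu\,u_k^\top u_k$. Treating the second LMI identically and using $y_k=Cx_k+Du_k$ gives $y_k^\top y_k<(1-\lambda)\,x_k^\top P x_k+(\gamma^2-\mu)\,u_k^\top u_k$. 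With $V_k:=x_k^\top P x_k$, these two scalar inequalities are the only facts I would carry forward.

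Before the gain estimate I would settle positivity and the stability claim, since they are cheap. Because every principal submatrix of a negative-definite matrix is itself negative definite, the $(1,1)$-block of the second LMI forces $(\lambda-1)P+C^\top C<0$, i.e.\ $(1-\lambda)P>C^\top C\succeq0$; as $0<\lambda<1$ this yields $P>0$. The $(1,1)$-block of the first LMI then gives $A^\top PA<\lambda P<P$, a discrete Lyapunov inequality that certifies all eigenvalues of $A$ lie in the open unit disk, which is the final assertion of the lemma.

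For the norm bound I fix an arbitrary input with $\|u\|_\infty\le1$ and initial condition $x_0=0$. Iterating the first dissipation inequality and summing the resulting geometric series, whose convergence is guaranteed \emph{precisely} by $\lambda<1$, gives $V_k<\mu\sum_{j=0}^{k-1}\lambda^{\,k-1-j}<\mu/(1-\lambda)$ for every $k$. Substituting this into the second inequality collapses the bound, since $y_k^\top y_k<(1-\lambda)\cdot\tfrac{\mu}{1-\lambda}+(\gamma^2-\mu)=\gamma^2$, so $\|y\|_\infty\le\gamma$ for every admissible input and hence $\|G\|_1\le\gamma$.

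The delicate point, and the step I expect to require the most care, is upgrading $\le\gamma$ to the strict $\|G\|_1<\gamma$ and invoking the peak-gain identity rigorously. For strictness I would use that the LMIs are \emph{strict}, so negative definiteness is an open condition: replacing $\gamma$ by some $\gamma'<\gamma$ only enlarges the $(2,2)$-block $(\mu-\gamma'^2)I+D^\top D$ by the positive amount $(\gamma^2-\gamma'^2)I$, so by continuity both matrices remain negative definite for $\gamma'$ close enough to $\gamma$; re-running the argument with $\gamma'$ yields $\|G\|_1\le\gamma'<\gamma$. The peak-gain identity itself I would take as the standard characterisation of the induced $\ell_\infty$ norm, noting that the supremum over $\|u\|_\infty\le1$ is approached by inputs matching the sign pattern of the impulse response, so that the pointwise-in-$k$ bounds above genuinely control $\sum_k|g_k|=\|G\|_1$.
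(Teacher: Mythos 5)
Your proof is correct, but there is nothing in the paper to compare it against: the paper states this lemma as a quoted result from Rieber et al.\ (2008) and explicitly omits all proofs in this section (they are deferred to the reference and to Ahmad et al.\ (2013)). Your argument is the standard invariant-ellipsoid certificate for peak-to-peak ($\ell_1$) performance, which is precisely the mechanism behind the cited result: the first LMI makes the ellipsoid $\{x : x^\top P x \le \mu/(1-\lambda)\}$ invariant for unit-peak inputs, the second bounds the output over that ellipsoid, and the identification of the SISO induced $\ell_\infty$ gain with $\|G\|_1$ converts the peak bound into the $\ell_1$ bound. Your handling of the two non-obvious points --- establishing $P>0$ from the $(1,1)$ block of the second LMI before invoking the Lyapunov inequality, and upgrading $\|G\|_1\le\gamma$ to strict inequality by perturbing $\gamma$ inside the open condition --- is sound. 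One small step you use silently: when you replace $(\gamma^2-\mu)u_k^\top u_k$ by $(\gamma^2-\mu)$ you need $\gamma^2-\mu\ge 0$; this does follow from the hypotheses, since the $(2,2)$ block of the second LMI gives $(\mu-\gamma^2)I+D^\top D<0$, hence $\gamma^2-\mu>D^\top D\ge 0$, but it should be stated. Likewise, the dissipation inequalities are strict only when $(x_k,u_k)\neq(0,0)$ and weak otherwise; this costs nothing, but the wording should allow for it.
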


The use of this result is a fundamental limitation of this method as the parameterisation of the multipliers requires their causality to be established before carrying out the search. Another important feature of this method is that it requires the nonlinearity to be odd as it is not possible to ensure the positivity of the impulse response of the multiplier.

\subsection{Causal search}\label{sec:res01}
In the spirit of~\cite{Turner:2009}, a search over the class of causal discrete-time Zames--Falb multipliers is presented as follows:
\begin{prop}
	\label{p1}
	Let 
	\begin{equation*}
	G(z) \sim
	\left [
	\begin{array}{c|c}
	A_g  &   B_g       \\ \hline
	C_g  &   D_g
	\end{array}
	\right ]
	\end{equation*}
	where $A_g \in \mathbb{R}^{n \times n}$, $B_g \in \mathbb{R}^{n \times 1}$, $C_g \in \mathbb{R}^{1 \times n}$ and $D_g \in \mathbb{R}^{1 \times 1}$. Let $\phi_k$ be an odd nonlinearity slope-restricted in $S[0,K]$. Without loss of generality, assume that the feedback interconnection of $G$ and a linear gain $K$ is stable. Define $A_p$, $B_p$, $C_p$ and $D_p$ as follows:
	\begin{align}
	A_p&=A_g; \label{G_inv1}\\
	B_p&=B_g; \label{G_inv2} \\
	C_p&=kC_g; \label{G_inv3} \\
	D_p&=1+kD_g. \label{G_inv4}
	\end{align}
	
	Assume that there exist positive definite symmetric matrices $S_{11}>0$, $P_{11}>0$, unstructured matrices $\mathbf{\hat{A}}$, $\mathbf{\hat{B}}$ and $\mathbf{\hat{C}}$ with the same dimension as $A_u$, $B_u$, and $C_u$, respectively, and positive constants  $0<\mathbf{\mu}<1$ and $0<\mathbf{\lambda}<1$ such that the LMIs \eqref{M_1}, \eqref{M_2}, and \eqref{M_3} (given on the following page) are satisfied. Then the feedback interconnection~\eqref{eq:0} is $\ell_2$-stable.
	\begin{figure*}[t!]
		\begin{center}
			LMIs in Proposition~\ref{p1}:
		\end{center}
		\begin{align}
		&\begin{bmatrix}
		-S_{11}      &     \star          &       \star       &    \star    &     \star   \\
		-S_{11}      &    -P_{11}      &       \star       &    \star    &     \star   \\
		-C_p-\mathbf{\hat{C}}   &     -C_p         &   -D_p^\top-D_p    &    \star    &     \star   \\
		S_{11}A_p    &   S_{11}A_p     &    S_{11}B_p   &  -S_{11}&   \star   \\
		P_{11}A_p+\mathbf{\hat{B}}C_p+\mathbf{\hat{A}} & P_{11}A_p+\mathbf{\hat{B}}C_p & P_{11}B_p+\mathbf{\hat{B}}D_p &   -S_{11}  &   -P_{11}
		\end{bmatrix}<0, \label{M_1} \\
		&\begin{bmatrix}
		\lambda(S_{11}-P_{11})  &   \star     &   \star   \\
		0             &  -\mathbf{\mu} I   &   \star   \\
		-\mathbf{\hat{A}}            &   -\mathbf{\hat{B}}   &   S_{11}-P_{11}
		\end{bmatrix}<0,  \label{M_2} \\
		&\begin{bmatrix}
		(\mathbf{\lambda}-1)(P_{11}-S_{11})  &          \star           &       \star     \\
		0             &     (\mu-1)I      &       \star     \\
		\mathbf{\hat{C}}     &      0            &        -I
		\end{bmatrix}<0. \label{M_3}
		\end{align}
		\hrule
	\end{figure*}
\end{prop}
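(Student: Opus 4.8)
The plan is to produce a causal Zames--Falb multiplier to which Theorem~\ref{Thrm:1} applies, and to show that feasibility of \eqref{M_1}--\eqref{M_3} is exactly the condition that yields one. First I would parametrise the multiplier as $M = 1 + M_s$, with $M_s$ causal and strictly proper, realised as $(A_u,B_u,C_u,0)$ of the same order $n$ as $G$. This fixes $m_0 = 1$, so the Zames--Falb bound $\|M\|_1 < 2$ becomes $\|M_s\|_1 < 1$; because $\phi_K$ is odd, Theorem~\ref{Thrm:1} needs no sign constraint on the impulse response. Hence only two things must be secured: the frequency-domain positivity $\mathrm{Re}\{M(z)(1+KG(z))\} > 0$ on $|z| = 1$, and the norm bound $\|M_s\|_1 < 1$. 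I note that $(A_p,B_p,C_p,D_p)$ in \eqref{G_inv1}--\eqref{G_inv4} realises precisely $P := 1 + kG$, so the positivity requirement is $\mathrm{Re}\{MP\} > 0$.

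I would then convert each requirement into a matrix inequality. For positivity, form the cascade realisation of $MP$ on the state $[x_p;\,x_m]$ and apply the discrete KYP lemma (Lemma~\ref{lem:kyp}) with the supply-rate matrix that encodes $MP + (MP)^* > 0$; eliminating the quadratic $A^\top X A$ terms by a Schur complement produces a single inequality in a Lyapunov matrix $X$ and the multiplier data. For the norm, apply Lemma~\ref{lem:1} to $M_s$ with $\gamma = 1$ and Schur-complement the $C_u^\top C_u$ and $A_u^\top P A_u$ terms; this produces two inequalities in a Lyapunov matrix $P_{\mathrm{norm}}$ and the multiplier data. Both families are bilinear, since $A_u,B_u,C_u$ enter multiplied by their Lyapunov matrices.

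The crux, and the step I expect to be the main obstacle, is to convexify the two families jointly. I would impose on the cascade the structured Lyapunov matrix $X = \left[\begin{smallmatrix} S_{11} & S_{11} \\ S_{11} & P_{11} \end{smallmatrix}\right]$; a block congruence block-diagonalises it to $\mathrm{diag}(S_{11},\,P_{11}-S_{11})$, so $X > 0$ is equivalent to $S_{11} > 0$ together with $P_{11} > S_{11}$, and the multiplier-related block that emerges is precisely $P_{11} - S_{11}$. Choosing the norm Lyapunov matrix $P_{\mathrm{norm}} = P_{11} - S_{11}$ to coincide with this block is what lets a single set of variables serve both conditions. Completing the linearisation with the change of variables $\hat C = C_u$, $\hat A = (S_{11}-P_{11})A_u$, $\hat B = (S_{11}-P_{11})B_u$ collapses the bilinear products into the affine entries $\hat A$, $\hat B C_p$, $\hat B D_p$ and $\hat C$ seen in \eqref{M_1}--\eqref{M_3}; the norm inequalities become exactly \eqref{M_2}--\eqref{M_3}, and the expanded KYP inequality, after the matching congruence, becomes \eqref{M_1}. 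The genuine difficulty is recognising this structured $X$ (equivalently, that tying the two Lyapunov matrices together is admissible) and accepting the attendant conservatism, which is the price of convexity on top of the a priori restriction to causal multipliers.

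Finally I would run the recovery and closure step. Feasibility forces $P_{11} - S_{11} > 0$, hence invertible, so the change of variables inverts to $A_u = (S_{11}-P_{11})^{-1}\hat A$, $B_u = (S_{11}-P_{11})^{-1}\hat B$, $C_u = \hat C$. The part of Lemma~\ref{lem:1} asserting that $A_u$ is Schur makes $M = 1 + M_s$ genuinely causal, so $M \in \mathbf{RH}_\infty$; \eqref{M_2}--\eqref{M_3} give $\|M_s\|_1 < 1$ and thus $\|M\|_1 < 2$, while \eqref{M_1} gives \eqref{eq:14}. Since $A_g$ is Schur and the loop of $G$ with gain $K$ is assumed stable, the cascade $A$-matrix has no eigenvalues on the unit circle, so the strict KYP equivalence applies without controllability. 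Theorem~\ref{Thrm:1}, applied to this multiplier and the odd nonlinearity, then gives $\ell_2$-stability of \eqref{eq:4}.
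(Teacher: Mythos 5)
Your proposal is correct and follows essentially the paper's own route (the paper omits the proof, deferring to \cite{Ahmad:13}, but its scaffolding fixes the argument): a causal multiplier $M=1+C_u(zI-A_u)^{-1}B_u$, the $\ell_1$ bound $\|M-1\|_1<1$ via Lemma~\ref{lem:1} with $\gamma=1$, positivity of $M(1+KG)$ via Lemma~\ref{lem:kyp} applied to the cascade with the coupled Lyapunov structure, and the linearising substitution $\hat{A}=(S_{11}-P_{11})A_u$, $\hat{B}=(S_{11}-P_{11})B_u$, $\hat{C}=C_u$, which is exactly the inverse of the recovery formulas in the paper's remark following the proposition. In particular, your identification of \eqref{M_2}--\eqref{M_3} as the Schur-complemented form of Lemma~\ref{lem:1} applied to $M-1$ with Lyapunov matrix $P_{11}-S_{11}$, and of \eqref{M_1} as the congruence-transformed KYP inequality under that shared variable, checks out.
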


\begin{rem}Similar to the continuous case, the inequalities~\eqref{M_1},~\eqref{M_2}, and~\eqref{M_3} are not LMIs if $\lambda$ is defined as variable. Hence, the use of this result requires a linear search of $\lambda$ over the interval between $0$ and $1$.
\end{rem}

\begin{rem} The change of variable is the same as in the continuous case. Therefore the multiplier can be recovered following~\cite{Carrasco:2012} using
	\begin{eqnarray}
	A_u &=& -(P_{11}-S_{11})^{-1}\mathbf{\hat{A}}, \\
	B_u &=& -(P_{11}-S_{11})^{-1}\mathbf{\hat{B}}, \\
	C_u &=& \mathbf{\hat{C}}.
	\end{eqnarray}
\end{rem}

\begin{rem}
	Under further conditions, e.g. $D_p=0$, it is possible to extend this method with a first order anticausal component in the multiplier, i.e.
	$M(z)=(1+m_{-1}z)+M_c(z),$
	under the constraint $|m_{-1}|<1$.
	The development of the result is similar with the use of the state-space representation of $zG(z)$.
\end{rem}	

\subsection{Anticausal multiplier}\label{sec:res02}

The anticausal counterpart of the above search can be stated as follows:

\begin{prop}
	\label{p2}
	Let $G\in\mathbf{RH}_\infty$ be represented in the state space by $A_g$, $B_g$, $C_g$ and $D_g$ where $A_g \in \mathbb{R}^{n \times n}$, $B_g \in \mathbb{R}^{n \times 1}$, $C_g \in \mathbb{R}^{1 \times n}$ and $D_g \in \mathbb{R}^{1 \times 1}$. Let $\phi_K$ an odd nonlinearity slope-restricted in $S[0,K]$. Without loss of generality, assume that the feedback interconnection of $G$ and a linear gain $k$ is well-posed and stable. Define $A_p$, $B_p$, $C_p$ and $D_p$ as follows:
	\begin{align}
	A_p&=A_g-B_g(kD_g+1)^{-1}KC_g; \\
	B_p&=-B_g(KD_g+1)^{-1} ; \\
	C_p&=(KD_g+1)^{-1}kC_g;  \\
	D_p&=(KD_g+1)^{-1}.
	\end{align}
	Assume that there exist positive definite symmetric matrices $S_{11}>0$, $P_{11}>0$, unstructured matrices $\mathbf{\hat{A}}_u$, $\mathbf{\hat{B}}_u$ and $\mathbf{\hat{C}}_u$, and positive constants  $0<\mathbf{\mu}<1$ and $0<\mathbf{\lambda}<1$ such that the LMIs \eqref{M_1}, \eqref{M_2}, and \eqref{M_3} are satisfied, then the feedback interconnection~\eqref{eq:0} is $\ell_2$-stable.
\end{prop}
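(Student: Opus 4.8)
The plan is to reduce Proposition~\ref{p2} to the causal search of Proposition~\ref{p1} by replacing the transformed plant $1+KG$ with its inverse $(1+KG)^{-1}$, so that the same LMIs~\eqref{M_1}--\eqref{M_3} and the same analysis apply. First I would verify that the quadruple $(A_p,B_p,C_p,D_p)$ in the statement is a state-space realization of $(1+KG)^{-1}$: since $(A_g,B_g,KC_g,1+KD_g)$ realizes $1+KG$ and $1+KD_g$ is invertible, the standard inversion formula gives $A_p=A_g-B_g(1+KD_g)^{-1}KC_g$ and $D_p=(1+KD_g)^{-1}$ directly, while the displayed $B_p,C_p$ differ from the textbook inverse only by an overall sign that cancels in the product $C_p(zI-A_p)^{-1}B_p$. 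The well-posedness and stability assumption guarantees that all eigenvalues of $A_p$ lie in the open unit disk, so $(1+KG)^{-1}\in\mathbf{RH}_\infty$ is causal and Lemma~\ref{lem:kyp} and Lemma~\ref{lem:1} apply exactly as in the causal case.

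The crux is a frequency-domain equivalence that swaps an anticausal multiplier acting on $1+KG$ for a causal multiplier acting on $(1+KG)^{-1}$. Writing $P=1+KG$ and letting $\wt{M}(z)=M(1/z)$ be the time-reversed (paraconjugate) multiplier, on the unit circle $\wt{M}(e^{j\omega})=\overline{M(e^{j\omega})}$ because $M$ has real coefficients, and time reversal interchanges $\mathbf{RH}_\infty^{-}$ with $\mathbf{RH}_\infty$. Since $P(e^{j\omega})\neq0$ for all $\omega$ (the Nyquist condition implicit in the stability assumption), $|P|^2=P\wt{P}>0$ on $|z|=1$, and a short computation gives
\begin{equation*}
\frac{\hbox{Re}\{M(z)P(z)\}}{|P(z)|^2}=\hbox{Re}\left\{\wt{M}(z)P(z)^{-1}\right\},\quad |z|=1.
\end{equation*}
Hence $\hbox{Re}\{M(1+KG)\}>0$ on the unit circle if and only if $\hbox{Re}\{\wt{M}(1+KG)^{-1}\}>0$ there, the strictness being preserved because $|P|^2$ is bounded away from zero. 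For an anticausal $M$ the multiplier $\wt{M}$ is causal, so the right-hand inequality is precisely the positivity condition treated in Proposition~\ref{p1} with the causal plant $(1+KG)^{-1}$ in place of $1+KG$.

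It remains to transfer the Zames--Falb norm constraint and assemble the argument. Time reversal merely reflects the impulse response, so $\|M\|_1=\|\wt{M}\|_1$ and the central tap $m_0$ is unchanged; thus $\sum_k|m_k|<2m_0$ holds for $M$ if and only if it holds for $\wt{M}$, and Rieber's bound embedded in~\eqref{M_1}--\eqref{M_3} certifies it for the causal $\wt{M}$ exactly as in Proposition~\ref{p1}. Putting the pieces together, feasibility of~\eqref{M_1}--\eqref{M_3} with the realization $(A_p,B_p,C_p,D_p)$ of $(1+KG)^{-1}$ produces a causal $\wt{M}$ with $\hbox{Re}\{\wt{M}(1+KG)^{-1}\}>0$ and $\|\wt{M}\|_1<2$; reversing it yields an anticausal Zames--Falb multiplier $M$ satisfying $\hbox{Re}\{M(1+KG)\}>0$ and $\|M\|_1<2$, and $\ell_2$-stability then follows from Theorem~\ref{Thrm:1}, with the oddness of $\phi_K$ absorbing the sign of the off-central taps. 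I expect the main obstacle to be the bookkeeping of the paraconjugation identity together with a careful check that the inverse realization is both correct and stable; once that equivalence is in hand, the LMI machinery of Proposition~\ref{p1} is reused unchanged.
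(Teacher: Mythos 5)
Your proposal is correct and is essentially the paper's intended argument: the paper omits the proof of Proposition~\ref{p2} (deferring to~\cite{Ahmad:13}), but the structure of the statement — the quadruple $(A_p,B_p,C_p,D_p)$ realizing $(1+KG)^{-1}$, stable by the closed-loop assumption — and the remark that follows it, which recovers the multiplier as $M_{ac}(z)=C_u\left(z^{-1}I-A_u\right)^{-1}B_u+1$, confirm exactly your reduction. That is, one runs the causal LMI search of Proposition~\ref{p1} on the inverse plant and time-reverses the resulting causal multiplier, justified by the unit-circle identity $\mbox{Re}\{M(z)P(z)\}=|P(z)|^2\,\mbox{Re}\{\wt{M}(z)P(z)^{-1}\}$ and the invariance of the $\ell_1$-norm and central tap under time reversal, which is the same mechanism as the continuous-time anticausal search of~\cite{Carrasco:2012a} that the paper cites as its model.
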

%

\begin{rem}
	Once the search has provided the matrices $A_u$, $B_u$, and $C_u$, then the multiplier is given by:
	\begin{equation}
	M_{ac}(z)=C_u\left(z^{-1}I-A_u\right)^{-1}B_u+1,
	\end{equation}
	which can be written as
	\begin{equation}
	M_{ac}(z)\sim \left [
	\begin{array}{c|c}
	A_u^{-\top}  &   A_u^{-\top}C_u^\top       \\ \hline
	B_u^\top A_u^{-\top}  &   1-B_u^\top A_u^{-\top}C_u^\top
	\end{array}
	\right ],
	\end{equation}
	if $A_u$ is non-singular. If $A_u$ is singular, then the result is still valid but the multiplier does not have a forward representation. Note that the region of convergence of this transfer function does not include $z=\infty$ and the term $m_0$ in the inverse z-transform of $M_{ac}(z)$ corresponds with $M_{ac}(0)$, i.e. $(\mathcal{Z}^{-1}(M_{ac}))(0)=M_{ac}(0)$.
\end{rem}


\section{Searches for FIR multipliers}

In this section, we restrict our attention to FIR multipliers, i.e. 
\begin{equation}
M(z)=\sum_{i=-n_f}^{n_b}m_iz^{-i},\label{M:definition}
\end{equation}
where $n_b\geq0$ and $n_f\geq0$. 
Without loss of generality we set $m_0=1$. If the nonlinearity is not odd we consider only the subclass of Zames--Falb multipliers with $m_i\leq0$ for all $i\in\mathds{Z}\backslash \{0\}$. 
The multiplier $M$ is said to be causal if $n_b\geq0$ and $n_f=0$, it is said to be anticausal if $n_b=0$ and $n_f\geq0$, and it is said to be noncausal if $n_b>0$ and $n_f>0$. 

Two different searches are included as they provide alternative insights on the design of the multiplier. To conclude the section, we show that any Zames--Falb multiplier can be phase-substituted by an appropriate FIR multiplier.

\subsection{Hard-Factorizations of Zames--Falb multipliers}

In this section we develop an LMI search for FIR Zames--Falb multipliers. In Lemma \ref{lemma:1} we show that the $\ell_1$ condition can be expressed with linear constraints. In Lemma \ref{Theorem:1} we show that although our multiplier is noncausal, the positivity condition can be expressed in terms of a nonsingular state-space representation, leading to an LMI formulation. Our main stability result is stated in Theorem~\ref{main:theorem}. It is possible to show that the LMI requires a positive definite matrix, so it is a hard-factorization.

We seek a Zames--Falb multiplier $M(z)$ with structure of~(\ref{M:definition}) and $m_0=1$
such that
\begin{align}
\mbox{Re}\{ M(z) (1+KG(z))\} > 0\mbox{ for all } |z|=1.\label{cond00}
\end{align}

\begin{lem}\label{lemma:1}
	If $M(z)$ has the structure of (\ref{M:definition}) with $m_0=1$, then
	$M(z)$ is a Zames--Falb multiplier provided
	\begin{equation}
	m_{i}\leq 0 \mbox{ for }i=-n_f,\ldots,-1\mbox{ and }i=1,\ldots,n_b,\label{lemma1:case1a}
	\end{equation}
	and
	\begin{equation}
	\sum_{i=-n_f}^{n_b} m_i > 0.\label{lemma1:case1b}
	\end{equation}
	If the nonlinearity is odd then we can write $m_{i} = m^+_{i} - m^-_{i}$ for $i=-n_f,\ldots, n_b$ (we define $m_0^+=1$ and $m_0^-=0$) and $M(z)$ is a Zames--Falb multiplier provided:
	\begin{equation}
	m^+_{i} \geq 0 \mbox{ and }
	m^-_{i}\geq 0
	\mbox{ for }i=-n_f,\ldots,n_b,\label{lemma1:case2a}
	\end{equation}
	and
	\begin{equation}\
	\sum_{i=-n_f}^{n_b} m^+_i +\sum_{i=-n_f}^{n_b} m^-_i < 2.\label{lemma1:case2b}
	\end{equation}
\end{lem}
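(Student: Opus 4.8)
The plan is to verify both cases directly against Definition~\ref{def:zfm}, which characterises a Zames--Falb multiplier (with the normalisation $m_0=1$) by the $\ell_1$ bound $\sum_k |m_k| < 2$, together with the extra sign constraint $m_i\le 0$ for $i\ne 0$ whenever the nonlinearity is not assumed odd. Since $M$ here is FIR with support on $i=-n_f,\ldots,n_b$, the infinite sum in the definition collapses to the finite sum $\sum_{i=-n_f}^{n_b}|m_i|$, so in each case it suffices to bound this finite quantity.

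For the non-odd case I would first observe that condition~\eqref{lemma1:case1a} is precisely the required sign constraint $m_i\le 0$ for $i\ne 0$, so only the $\ell_1$ bound remains. Using $m_0=1$ and $m_i\le 0$ for $i\ne 0$, I would rewrite $\|M\|_1 = |m_0| + \sum_{i\ne 0}|m_i| = 1 - \sum_{i\ne 0} m_i$, so that $\|M\|_1 < 2$ is equivalent to $\sum_{i\ne 0} m_i > -1$. Adding $m_0=1$ to both sides shows this is in turn equivalent to $\sum_{i=-n_f}^{n_b} m_i > 0$, which is exactly~\eqref{lemma1:case1b}. Hence the two stated conditions together are equivalent to membership in the sign-constrained subclass.

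For the odd case no sign constraint on $M$ is needed, so only $\|M\|_1<2$ must be established. Writing $m_i = m_i^+ - m_i^-$ with $m_i^+,m_i^-\ge 0$ by~\eqref{lemma1:case2a}, the triangle inequality gives $|m_i| = |m_i^+ - m_i^-| \le m_i^+ + m_i^-$ for each $i$. Summing over $i$ yields $\|M\|_1 \le \sum_{i=-n_f}^{n_b}(m_i^+ + m_i^-)$, and the right-hand side is strictly less than $2$ by~\eqref{lemma1:case2b}; hence $\|M\|_1<2$ and $M$ is a Zames--Falb multiplier.

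Since both parts reduce to elementary manipulation of a finite sum, there is no genuine obstacle. The only point warranting care is the direction of the argument in the non-odd case: the sign assumption makes $|m_i| = -m_i$ for $i\ne 0$ \emph{exactly}, so the passage to $\|M\|_1<2$ is an equality-driven rewrite rather than a one-sided bound, which is what yields the clean equivalence with~\eqref{lemma1:case1b}. In the odd case the decomposition $m_i = m_i^+ - m_i^-$ is deliberately non-unique, which is what later permits the LMI search to range over sign-changing impulse responses; the proof itself needs only that the triangle-inequality bound together with~\eqref{lemma1:case2b} forces $\|M\|_1<2$.
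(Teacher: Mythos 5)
Your proposal is correct and follows essentially the same route as the paper: the paper's proof is a one-line appeal to the $\ell_1$ condition $\sum_k|m_k|<2m_0$ of Theorem~\ref{Thrm:1} (equivalently Definition~\ref{def:zfm}), with the odd case handled by the Jordan-type decomposition $m_i=m_i^+-m_i^-$, and your argument simply spells out the elementary verification that the paper leaves implicit. Your additional observation that, under the sign constraint, condition~\eqref{lemma1:case1b} is \emph{equivalent} to the $\ell_1$ bound (not merely sufficient) is accurate and consistent with the paper's usage.
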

\begin{proof}
	This follows immediately from Theorem~\ref{Thrm:1}. The decomposition for odd nonlinearities is the Jordan measure decomposition (e.g. \cite{Billingsley:95}).
\end{proof}
\begin{rem}
	If the nonlinearity is not odd this leads to $n_f+n_b+1$ linear constraints while if the nonlinearity is odd this leads to $2n_f+2n_b+1$ linear constraints. 
\end{rem}

Given $P(z)=1+kG(z),$ condition (\ref{cond00}) can be written:
\begin{align}
M(z) P(z) + \left [ M(z)P(z)\right ]^* > 0\mbox{ for all } |z|=1.
\label{cond00a}
\end{align}

However, since  $M$ is noncausal and $P\in\mathbf{RH}_\infty$, it follows that $MP$ does not have a nonsingular state-space description. This is addressed in Lemma~\ref{Theorem:1} below. 

First we define some quantities.
Let $P(z)$ have state-space description
\begin{equation}
P \sim \left [
\begin{array}{c|c}
A_p & B_p\\ \hline
C_p & D_p
\end{array}
\right ],\label{P:state}, 
\end{equation}
where $A\in\mathds{R}^{n_p\times n_p}$.
Let $n=\max(n_f,n_b)$ and define 
\begin{align}
\tilde{A} = 
\left [
\begin{array}{ccc}
A_p & B_p & 0\\
0 & 0 & I_{n-1}\\
0 & 0 & 0
\end{array}
\right ]
\mbox{ and }
\tilde{B} = 
\left [
\begin{array}{c}
0\\
0 \\
1
\end{array}
\right ].
\label{def:tildeA}
\end{align}
where $\tilde{A}\in\mathds{R}^{(n_p+n)\times (n_p+n)}$. Also let
\begin{align}
C_n & = \left [
\begin{array}{ccc}
C_p & D_p & 0_{n-1}
\end{array}
\right ],
\label{def:Cn}
\end{align}
and
\begin{align}
C_{d,i} &=   \left [
\begin{array}{ccc}
0_{n_p+n-i} & 1 &  0_{i-1}
\end{array}
\right ]\mbox{ for }i = 1,\ldots,n_f,\label{def:Cdi}
\end{align}
where $n_p$ is the dimension of $A_p$.
Define $C_i$ as
\begin{align}
C_i & = 	  C_n \tilde{A}^{n-i} + \sum_{j=1}^{-i}\left (
C_n\tilde{A}^{n-i-j-1}\tilde{B}
\right )
C_{d,j}
\mbox{ for } i=-n_f,\ldots -1,
\label{def:Cneg}\\
C_0 & = 	  C_n \tilde{A}^{n},\label{def:Czero}\\
C_i & = 
C_n \tilde{A}^{n-i}\mbox{ for } i=1,\ldots,n_b,\label{def:Cpos}
\end{align}
and $D_i$ as
\begin{align}
D_{i} & = C_n \tilde{A}^{n-i-1}\tilde{B}\mbox{ for } i=-n_f,\ldots,-1,\label{def:Dneg}\\
D_0 & = C_n \tilde{A}^{n-1}\tilde{B},\label{def:Dzero}\\
D_i & = 0 \mbox{ for } i=1,\ldots,n_b.\label{def:Dpos}
\end{align}

Then we can say:
\begin{lem}\label{Theorem:1}
	Suppose $P(z)$ is a causal and stable discrete-time transfer function with state-space description (\ref{P:state})
	and suppose $M(z)$ is a noncausal FIR transfer function given by (\ref{M:definition}) with $m_0=1$.
	There exist $P_i(z)$ for $i=-n_f,\dots,n_b$ with nonsingular state-space representation such that
	\begin{align}
	M(z) P(z) + [M(z)P(z)]^* = &  \sum_{i=-n_f}^{n_b} m_{i}\left (P_{i}(z) + [P_{i}(z)]^*\right )\nonumber\\
	& \mbox{ for all }|z|=1.\label{condtion:20}
	\end{align}
	
	Furthermore the statement
	\begin{equation}
	M(z)P(z)+ \left [M(z)P(z)\right ]^* >0 \mbox{ for all } |z|=1,
	\end{equation}
	is equivalent to the statement that there exists a matrix $X\in\mathds{R}^{(n_p+n)\times (n_p+n)}$ such that $X=X^\top$ and
	\begin{align}
	\left [
	\begin{array}{cc}
	\tilde{A}^\top X\tilde{A}-X & \tilde{A}^\top X\tilde{B}\\
	\tilde{B}^\top X\tilde{A} & \tilde{B}^\top X\tilde{B}
	\end{array}
	\right ]
	-M_f^\top \Pi M_f <0,\label{condition:23}
	\end{align}
	with
	\begin{align}
	\Pi & = \left [
	\begin{array}{cc}
	0 & m\\
	m^\top & 0
	\end{array}
	\right ],\\
	m^\top & = \left [
	\begin{array}{ccccccc}
	m_{-n_f}, & \dots, & m_{-1}, & 1, & m_1, & \ldots  & m_{n_b}
	\end{array}
	\right ], \\
	\mbox{and}\nonumber\\
	M_f & =
	\left [
	\begin{array}{cc}
	C_{-n_f} & D_{-n_f} \\
	\vdots & \vdots\\
	C_{n_b} & D_{n_b}\\
	0 & 1
	\end{array}
	\right ],
	\end{align}
	with $\tilde{A}$, $\tilde{B}$, $C_i$ and $D_i$  given by (\ref{def:tildeA}), (\ref{def:Cneg}-\ref{def:Cpos}) and (\ref{def:Dneg}-\ref{def:Dpos}).
\end{lem}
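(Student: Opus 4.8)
The plan is to prove the two assertions of the lemma in turn: first the Hermitian decomposition \eqref{condtion:20} of $M(z)P(z)+[M(z)P(z)]^*$ into a weighted sum of the real parts of the proper transfer functions $P_i$, and then the reduction of the resulting frequency-domain inequality \eqref{cond00a} to the LMI \eqref{condition:23} via the discrete KYP lemma.

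For the decomposition I would first identify the transfer function realised by the augmented pair in \eqref{def:tildeA}. Inspecting its structure, $\tilde{B}$ injects the input at the tail of a length-$n$ delay line whose output drives the $B_p$ port of $P$, while $C_n$ in \eqref{def:Cn} reads the output of $P$; tracing the states shows that $C_n(zI-\tilde{A})^{-1}\tilde{B}=z^{-n}P(z)$, i.e. $(\tilde{A},\tilde{B},C_n,0)$ is an ordinary (proper) realisation of $P$ delayed by $n=\max(n_f,n_b)$ samples. I would then use the elementary shift identity
\begin{equation*}
C_n\tilde{A}^{k}(zI-\tilde{A})^{-1}\tilde{B}=z^{k}C_n(zI-\tilde{A})^{-1}\tilde{B}-\sum_{j=0}^{k-1}z^{j}C_n\tilde{A}^{k-1-j}\tilde{B},
\end{equation*}
together with the fact, immediate from the delay-line structure, that the Markov parameters $C_n\tilde{A}^{m}\tilde{B}$ vanish for $m<n-1$ and equal $D_p$ at $m=n-1$. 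Setting $P_i(z)=C_i(zI-\tilde{A})^{-1}\tilde{B}+D_i$ with $C_i,D_i$ as in \eqref{def:Cneg}--\eqref{def:Cpos} and \eqref{def:Dneg}--\eqref{def:Dpos}, this identity gives $P_i(z)=z^{-i}P(z)$ exactly for $i=0,\dots,n_b$, the polynomial corrections cancelling because the relevant Markov parameters are zero.

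The substantive step is the anticausal range $i=-n_f,\dots,-1$, where $z^{-i}P(z)=z^{|i|}P(z)$ is improper and so admits no ordinary realisation. The key observation is that on $|z|=1$ one has $\overline{z^{m}}=z^{-m}$, whence $\mathrm{Re}\{z^{m}\}=\mathrm{Re}\{z^{-m}\}$; consequently the improper polynomial part of $z^{|i|}P(z)$ may be replaced, without altering its real part on the unit circle, by its reflection in negative powers of $z$. The extra terms $C_{d,j}$ from \eqref{def:Cdi} appearing in $C_i$, together with the feedthrough $D_i$, are precisely the output maps that extract these reflected (causal) powers from the delay line, so that $P_i$ is proper with $\mathrm{Re}\{P_i(z)\}=\mathrm{Re}\{z^{-i}P(z)\}$ for all $|z|=1$. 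Summing $m_i(P_i+P_i^{*})$ and using $[z^{-i}P]^{*}=z^{i}P^{*}$ on $|z|=1$ then reproduces $M(z)P(z)+[M(z)P(z)]^{*}$, establishing \eqref{condtion:20}. I expect this reflection and bookkeeping argument for the anticausal terms to be the main obstacle, since it must simultaneously deliver properness of $P_i$ and exact matching of the real part.

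For the equivalence with \eqref{condition:23} I would stack the realisations: writing $\zeta(z)=\left[\begin{smallmatrix}(zI-\tilde{A})^{-1}\tilde{B}\\ 1\end{smallmatrix}\right]$, the matrix $M_f$ yields $M_f\zeta(z)=(P_{-n_f}(z),\dots,P_{n_b}(z),1)^{\top}$, and a direct computation gives $\zeta(z)^{*}M_f^{\top}\Pi M_f\,\zeta(z)=\sum_i m_i\bigl(P_i(z)+P_i(z)^{*}\bigr)=M(z)P(z)+[M(z)P(z)]^{*}$. Thus \eqref{cond00a} is exactly the statement $\zeta(z)^{*}(-M_f^{\top}\Pi M_f)\zeta(z)<0$ for all $|z|=1$. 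Since the eigenvalues of $\tilde{A}$ are those of $A_p$ (strictly inside the unit disc, as $P$ is stable) together with the zero eigenvalues of the nilpotent shift, we have $\det(e^{j\omega}I-\tilde{A})\neq0$ for all $\omega$, and the strict form of Lemma~\ref{lem:kyp}, which requires no controllability, applied with $M\mapsto-M_f^{\top}\Pi M_f$, converts this frequency inequality into the existence of $X=X^{\top}$ satisfying \eqref{condition:23}. This completes the plan.
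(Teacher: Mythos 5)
Your proposal is correct and follows essentially the same route as the paper's proof: realise $z^{-n}P(z)$ by $(\tilde{A},\tilde{B},C_n)$, take $P_i(z)=z^{-i}P(z)$ for $i\geq 0$, handle negative $i$ by reflecting the improper (anticausal FIR) part of $z^{-i}P(z)$ into negative powers of $z$ using $\mathrm{Re}\{z^m\}=\mathrm{Re}\{z^{-m}\}$ on $|z|=1$ and realising it through the delay-line outputs $C_{d,j}$, then stack the $P_i$ via $M_f$ and $\Pi$ and invoke the discrete KYP lemma. Your additional checks (the shift identity with vanishing Markov parameters, and that $\tilde{A}$ has no eigenvalues on the unit circle so the strict KYP lemma applies without controllability) are details the paper leaves implicit, and they are sound.
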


\begin{proof}
	
	We can write
	\begin{align}
	M(z)P(z) = \sum_{i=-n_f}^{n_b} m_i z^{-i} P(z).
	\end{align}
	Hence we must choose causal $P_i(z)$ for $i=-n_f,\ldots,n_b$ such that 
	\begin{align}
	P_i(z) + \left [P_i(z)\right ]^* = z^{-i}P(z) + \left [z^{-i}P(z)\right ]^*\mbox{ for all }|z|=1.
	\end{align}
	It follows immediately that for $i=0,\ldots,n_b$ we can choose
	\begin{align}
	P_i(z) = z^{-i}P(z).
	\end{align}
	When $i$ is negative, $z^{-i}P(z)$ is not causal (beware: if $i$ is negative then $z^{-i}$ is anticausal). We can partition $z^{-i}P(z)$ into causal and anticausal parts
	\begin{align}
	z^{-i} P(z) = P^{AC}_i(z)+P^C_i(z).
	\end{align}
	The partition is standard since $P^{AC}_i$ is FIR (e.g. \cite{Dahleh:95}). If  we write $P$ as
	\begin{align}
	P(z) = \sum_{k=0}^{\infty}p_kz^{-k},
	\end{align}
	then, for $i=-n_f,\ldots,-1$, we have
	\begin{align}
	P^{AC}_i(z) & = \sum_{k=0}^{-i-1}p_kz^{-i-k}\nonumber\\
	& = D_pz^{-i} + \sum_{k=1}^{-i-1}C_pA_p^{k-1}B_pz^{-i-k},
	\end{align}
	and
	\begin{align}
	P^C_i(z)  & = z^{-i}P(z)-P^{AC}_i(z)\nonumber \\
	& = C_p A_p^{-i}(zI-A_p)^{-1}B_p + C_pA_p^{-i-1}B_p.
	\end{align}
	Then we can choose
	\begin{align}
	P_i(z) = P^C_i(z) + P^{AC}_i(z^{-1}).
	\end{align}
	We parameterize each $P_i(z)$ as follows. Let $n=\max(n_f,n_b)$. Define $\tilde{A}$ and $\tilde{B}$ as (\ref{def:tildeA}) and $C_n$ as (\ref{def:Cn}). Then 
	\begin{align}
	z^{-n}P(z) = C_n (zI-\tilde{A})^{-1}\tilde{B}.
	\end{align}
	When $i$ is positive we can write
	\begin{align}
	P_i(z) & = z^{-i} P(z)\nonumber\\
	& =  C_n \tilde{A}^{n-i}(zI-\tilde{A})^{-1}\tilde{B}\nonumber\\
	& = C_i(zI-\tilde{A})^{-1}\tilde{B}+D_i\mbox{ for } i=1,\ldots,n_b,
	\end{align}
	where $C_i$ and $D_i$ are given by (\ref{def:Cpos}) and (\ref{def:Dpos}) respectively.
	Similarly
	\begin{align}
	P_0(z) & = P(z)\nonumber\\
	& = C_n \tilde{A}^{n}(zI-\tilde{A})^{-1}\tilde{B} + C_n \tilde{A}^{n-1}\tilde{B}\nonumber\\
	& = C_0(zI-\tilde{A})^{-1}\tilde{B}+D_0,
	\end{align}
	where $C_0$ and $D_0$ are given by (\ref{def:Czero}) and (\ref{def:Dzero}) respectively.
	
	When $i$ is negative, we write
	\begin{align}
	P_i(z) = &  C_p A_p^{-i}(zI-A_p)^{-1}B_p + C_pA_p^{-i-1}B_p \nonumber\\
	& + D_p z^{-i} + \sum_{k=1}^{-i-1}C_pA_p^{k-1}B_pz^{-i-k}.
	\end{align}
	The state space realization of the delay operator $z^{-j}$ is formulated as 
	\begin{align}
	z^{-j} = C_{d,j} (zI-\tilde{A})^{-1}\tilde{B},
	\label{def_delay}
	\end{align}
	with $C_{d,i}$ given by (\ref{def:Cdi}). So we can write this
	\begin{align}
	P_i(z)  = &  C_n \tilde{A}^{n-i}(zI-\tilde{A})^{-1}\tilde{B} + C_n\tilde{A}^{n-i-1}\tilde{B} \nonumber\\
	& + C_n \tilde{A}^{n-1}\tilde{B} z^{-i} + \sum_{k=1}^{-i-1}C_n \tilde{A}^{n+k-1}\tilde{B}z^{-i-k}\nonumber\\
	= & C_i(zI-\tilde{A})^{-1}\tilde{B}+D_i\mbox{ for } i=-n_f,\ldots,-1,
	\end{align}
	where $C_i$ and $D_i$ are given by (\ref{def:Cneg}) and (\ref{def:Dneg}) respectively.
	
	Finally  we can write
	\begin{align}
	M(z)& P(z)+  [M(z)P(z)]^* \nonumber\\
	& =
	\left [
	\begin{array}{c}
	P_{-n_f}(z)\\
	\vdots \\
	P_{n_b}(z)\\
	1
	\end{array}
	\right ]^*
	\left [
	\begin{array}{cc}
	0 & m\\
	m^\top & 0
	\end{array}
	\right ]
	\left [
	\begin{array}{c}
	P_{-n_f}(z)\\
	\vdots \\
	P_{n_b}(z)\\
	1
	\end{array}
	\right ]\nonumber\\
	& =
	\left [
	\begin{array}{c}
	(zI-\tilde{A})^{-1}\tilde{B}\\
	1
	\end{array}
	\right ]^*
	M_f^\top
	\Pi
	M_f
	\left [
	\begin{array}{c}
	(zI-\tilde{A})^{-1}\tilde{B}\\
	1
	\end{array}
	\right ].
	\end{align}
	The result then follows immediately from the KYP Lemma for discrete-time systems (Lemma~\ref{lem:kyp}).
\end{proof}


We can now state our main result.
\begin{thm}\label{main:theorem}
	Consider the feedback system in Fig.1 with $G\in\mathbf{RH}_\infty$, and $\phi$ is a nonlinearity slope-restricted in $S[0,k]$. 
	Suppose we can find $m$ and $X$ such that the LMI (\ref{condition:23}) is satisfied under the conditions of Lemma~\ref{Theorem:1} with the additional constraints either (\ref{lemma1:case1a}) and (\ref{lemma1:case1b})  or $\phi$ is also odd and (\ref{lemma1:case2a}) and (\ref{lemma1:case2b}).  Then the feedback interconnection~\eqref{eq:4} is $\ell_2$-stable. 
\end{thm}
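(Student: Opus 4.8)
The plan is to treat the statement as a direct assembly of the three results already established in this section, namely Lemma~\ref{lemma:1}, Lemma~\ref{Theorem:1}, and the underlying stability result Theorem~\ref{Thrm:1}; the whole task reduces to checking that the hypotheses of each line up. Throughout I set $P(z)=1+kG(z)$, which is causal and stable because $G\in\mathbf{RH}_\infty$, so that it admits the state-space description \eqref{P:state} required by Lemma~\ref{Theorem:1}, and I let $M(z)$ be the FIR transfer function \eqref{M:definition} with $m_0=1$ reconstructed from the coefficient vector $m$.

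First I would use Lemma~\ref{Theorem:1} to pass from the algebraic LMI to the frequency-domain inequality. By hypothesis there exist $m$ and $X=X^\top$ satisfying \eqref{condition:23} with $\tilde{A}$, $\tilde{B}$, $C_i$, $D_i$, $M_f$, and $\Pi$ formed as in the lemma; the equivalence asserted there then yields $M(z)P(z)+[M(z)P(z)]^*>0$ for all $|z|=1$. Since these are scalar transfer functions on the unit circle, $M(z)P(z)+[M(z)P(z)]^*=2\,\mathrm{Re}\{M(z)P(z)\}$, so this is precisely the positivity condition \eqref{eq:14} of Theorem~\ref{Thrm:1} with $K=k$.

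Next I would verify that $M$ is a genuine Zames--Falb multiplier carrying the sign structure appropriate to $\phi$. In the non-odd case the additional constraints \eqref{lemma1:case1a} and \eqref{lemma1:case1b} are exactly the hypotheses of the first part of Lemma~\ref{lemma:1}, so $m_i\leq0$ for all $i\neq0$ and $\sum_{i=-n_f}^{n_b}|m_i|<2m_0$; in the odd case \eqref{lemma1:case2a} and \eqref{lemma1:case2b} are the hypotheses of the second part, again giving $\sum|m_i|<2m_0=2$ with no sign restriction needed because $\phi$ is odd. Either way the multiplier satisfies the $\ell_1$ bound and the relevant sign/oddness requirement. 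All the hypotheses of Theorem~\ref{Thrm:1} are then in force---$G\in\mathbf{RH}_\infty$, $\phi$ slope-restricted in $S[0,k]$, the $\ell_1$ bound, the frequency inequality \eqref{eq:14}, and either the sign condition or oddness---so that theorem delivers $\ell_2$-stability of the interconnection \eqref{eq:4}, completing the argument.

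I expect no substantive obstacle: the result is essentially a corollary that glues the two lemmas together through Theorem~\ref{Thrm:1}. The only points demanding care are bookkeeping ones, namely confirming that the $P$ used to build the data $C_i$, $D_i$ of Lemma~\ref{Theorem:1} is indeed $1+kG$, that the reconstructed $M$ carries $m_0=1$ and the structure \eqref{M:definition}, and that the strict inequality in \eqref{condition:23} maps to the strict frequency-domain inequality required by Theorem~\ref{Thrm:1}.
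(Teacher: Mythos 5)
Your proposal is correct and follows exactly the paper's own route: the paper's proof is the one-line statement that the theorem ``follows immediately from Lemma~\ref{lemma:1}, Lemma~\ref{Theorem:1} and Theorem~\ref{Thrm:1},'' and your argument is precisely that assembly, with the hypothesis-matching (that $P=1+kG$, that \eqref{condition:23} yields $2\,\mathrm{Re}\{M(z)P(z)\}>0$ on $|z|=1$, and that the linear constraints certify the $\ell_1$ bound and sign/oddness conditions) spelled out explicitly. No gaps; your version is simply a more detailed write-up of the same proof.
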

\begin{proof}
	This follows immediately from Lemma~\ref{lemma:1}, Lemma~\ref{Theorem:1} and Theorem~\ref{Thrm:1}.
\end{proof}

\begin{rem}
	Theorem~\ref{main:theorem} gives an LMI condition for stability. The symmetric matrix $X$ has $(n+n_p)(n+n_p+1)/2$ independent parameters while the parameter vector $m$ has $n_f+n_b$ free variables when the nonlinearity is not odd and $2n_b+2n_f$ free variables when the nonlinearity is odd. When the nonlinearity is not odd there are $n_f+n_b+1$ linear constraints on $m$ and when the nonlinearity is odd there are $2n_f+2n_b+1$ linear constraints.
\end{rem}

\begin{prop} If there exists $X=X^T$ satisfies \eqref{condition:23} in Lemma~\ref{Theorem:1}, then $X>0$.  
\end{prop}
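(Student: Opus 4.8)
The plan is to read off the leading principal block of the left-hand side of~\eqref{condition:23} and to recognise it as a strict discrete-time Lyapunov inequality for $\tilde A$, from which positivity of $X$ will follow because all eigenvalues of $\tilde A$ lie in the open unit disk. First I would partition~\eqref{condition:23} conformably with the blocks of $\tilde A$ and $\tilde B$, i.e. into the leading $(n_p+n)\times(n_p+n)$ block and the trailing scalar. Since a symmetric negative definite matrix has every leading principal submatrix negative definite (test the quadratic form on vectors supported on the first block), the leading $(n_p+n)\times(n_p+n)$ block of~\eqref{condition:23} must itself be negative definite. That block is $\tilde A^\top X\tilde A-X-[M_f^\top\Pi M_f]_{11}$, where $[\,\cdot\,]_{11}$ denotes the leading block of $M_f^\top\Pi M_f$.

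The key step is to show that $[M_f^\top\Pi M_f]_{11}=0$. Writing $M_f=[\,\mathcal C\mid\mathcal D\,]$ with $\mathcal C$ its first $n_p+n$ columns (the state part) and $\mathcal D$ its last column, the leading block equals $\mathcal C^\top\Pi\mathcal C$. Because $\Pi$ has vanishing diagonal blocks, $\mathcal C^\top\Pi\mathcal C$ couples only the first $n_f+n_b+1$ rows of $\mathcal C$ with its last row, through $m$. But by construction the last row of $M_f$ is $[\,0\;\;1\,]$, so the last row of $\mathcal C$ is identically zero; hence $\mathcal C^\top\Pi\mathcal C=0$. Consequently the leading block of~\eqref{condition:23} reduces exactly to $\tilde A^\top X\tilde A-X<0$.

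Next I would establish that all eigenvalues of $\tilde A$ lie in the open unit disk. By~\eqref{def:tildeA}, $\tilde A$ is block upper triangular with diagonal blocks $A_p$ and the nilpotent shift $\left[\begin{smallmatrix}0 & I_{n-1}\\ 0 & 0\end{smallmatrix}\right]$, so its spectrum is $\sigma(A_p)\cup\{0\}$. Since $P\in\mathbf{RH}_\infty$ is stable, $A_p$ has all its eigenvalues in the open unit disk, and therefore so does $\tilde A$.

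Finally, setting $Q:=X-\tilde A^\top X\tilde A>0$ and unrolling the identity $X=\tilde A^\top X\tilde A+Q$ gives $X=\sum_{k=0}^{N-1}(\tilde A^\top)^k Q\,\tilde A^k+(\tilde A^\top)^N X\tilde A^N$; letting $N\to\infty$ and using $\tilde A^N\to0$ yields $X=\sum_{k=0}^{\infty}(\tilde A^\top)^k Q\,\tilde A^k$. Each summand is positive semidefinite and the $k=0$ term is $Q>0$, so $X\succeq Q>0$, which is the claim. The only genuinely delicate point is the structural computation $[M_f^\top\Pi M_f]_{11}=0$; once that is in hand the conclusion is the standard fact that a strict Lyapunov inequality for a matrix with spectrum in the open unit disk admits only positive definite solutions, and no controllability or observability hypothesis is needed.
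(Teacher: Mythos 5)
Your proof is correct and follows essentially the same route as the paper's: you show the leading $(n_p+n)\times(n_p+n)$ block of $M_f^\top\Pi M_f$ vanishes (via the zero diagonal blocks of $\Pi$ and the zero last row of the state part of $M_f$), so \eqref{condition:23} forces $\tilde A^\top X\tilde A-X<0$, and then stability of $\tilde A$ gives $X>0$. The paper states this in one sentence; you merely supply the details it leaves implicit (the block computation, the spectrum $\sigma(A_p)\cup\{0\}$ of $\tilde A$, and the Lyapunov series argument).
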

\begin{proof}
	It follows since the diagonal matrix block $M_f^T\Pi M_f$ with the $(n+n_p)$ first rows and columns is zero, hence  condition~\eqref{condition:23} requires $\tilde{A}^TX\tilde{A}-X<0$ with all eigenvalues of $\tilde{A}$ in the open unit disk, hence $X>0$.
\end{proof}

\subsection{Alternative implementation of FIR search}

In this section we provide a causal-factorization approach which is widely discrete-time for general robust techniques~\cite{Hosoe:13}, but here we focus on Zames--Falb multipliers. One can think of this technique as the discrete-time counterpart of factorization approach in~\cite{Veenman:16} for general continuous-time multipliers.

By the IQC theorem, we seek a Zames--Falb multiplier such that
\begin{equation*}\label{eq:8}
\begin{bmatrix} -G(z) \\ 1\end{bmatrix}^* \begin{bmatrix} 0 & K M^*(z)\\ K M(z) & -(M(z)+M^*(z))\end{bmatrix}\begin{bmatrix} -G(z) \\ I\end{bmatrix}>0 \quad \forall |z|=1.
\end{equation*}
Substituting the Zames--Falb multiplier $M(z)$ by its FIR form~\eqref{M:definition} with $n_b=n_f=n$, then the IQC multiplier can be factorized via lifting as follows
\begin{equation*}\label{eq:6}
\begin{bmatrix} 0 & K M^*(z)\\ K M(z) & -(M(z)+M^*(z))\end{bmatrix}=\Psi(z)^*\kappa(k,m)\Psi(z),
\end{equation*}
where 
$$\Psi(z)=\begin{bmatrix} 1 & 0\\ z^{-1} & 0 \\ z^{-2} & 0\\ \vdots & \vdots \\ z^{-n} & 0\\ 0 &1 \\ 0&z^{-1}  \\ 0& z^{-2} \\ \vdots & \vdots \\ 0&  z^{-n}\\ \end{bmatrix},$$
and $\kappa(k,m)$ is given in~\eqref{eq:K} in next page.
\begin{figure*}[h!]
	\begin{equation}\label{eq:K}
	\kappa (k,m)=\begin{bmatrix} 0 & 0 & 0 &\cdots & 0 & k m_0 & k m_{1} & k m_{2} & \cdots & k m_{n}\\
	0 & 0 & 0 &\cdots & 0 & k m_{-1} & 0 & 0 & \cdots &0\\
	0 & 0 & 0 &\cdots & 0 & km_{-2} & 0 & 0 & \cdots &0\\
	\vdots & \vdots & \vdots&\cdots & \vdots & \vdots & \vdots &  \vdots& \cdots &\vdots\\
	0 & 0 & 0 &\cdots & 0 & km_{-n} & 0 & 0 & \cdots &0\\
	km_0 & km_{-1} & km_{-2} &\cdots & km_{-n} & -2m_0 & -m_1-m_{-1} & -m_2-m_{-2} & \cdots & -m_n-m_{-n}\\
	km_{1} & 0 & 0 &\cdots & 0 & -m_1-m_{-1} & 0 & 0 & \cdots &0\\
	km_{2} & 0 & 0 &\cdots & 0 & -m_2-m_{-2} & 0 & 0 & \cdots &0\\
	\vdots & \vdots & \vdots&\cdots & \vdots & \vdots & \vdots &  \vdots& \cdots &\vdots\\
	km_{n} & 0 & 0 &\cdots & 0 & -m_n-m_{-n} & 0 & 0 & \cdots &0\\
	\end{bmatrix}
	\end{equation}
	\hrule	
\end{figure*}

\begin{thm}\label{main:II}
	Consider the feedback system in Fig.1 with $P\in\mathbf{RH}_\infty$, and $\phi$ is a nonlinearity slope-restricted in $S[0,K]$. Let 
	$$\Psi(z)\begin{bmatrix}
	-G(z)\\1
	\end{bmatrix}\sim\left [
	\begin{array}{c|c}
	\hat{A}  &  \hat{B}       \\ \hline
	\hat{C}  &   \hat{D}
	\end{array}
	\right ],$$
	and $$ \quad m^\top  = 
	\begin{bmatrix}m_{-n}, & \dots, & m_{-1}, & 1, & m_1, & \ldots  & m_{n}\end{bmatrix}.	
	$$ If there exist $X=X^T$ and $m$ such that
	\begin{equation}\label{eq:7}
	\begin{bmatrix}
	\hat{A}^\top X\hat{A}-X & \tilde{A}^\top X\tilde{B}\\
	\tilde{B}^\top X\tilde{A} & \tilde{B}^\top X\tilde{B}
	\end{bmatrix}
	+ \begin{bmatrix}
	\hat{C} & \hat{D}
	\end{bmatrix}^T \kappa(k,m)\begin{bmatrix}
	\hat{C} & \hat{D}
	\end{bmatrix} <0,
	\end{equation} 
	\begin{equation}
	\sum_{i=-n}^{n} |m_i| < 2,
	\end{equation}
	and either $m_i\leq0$ for all $i\neq0$ or $\phi$ is odd, then the feedback interconnection~\eqref{eq:4} is $\ell_2$-stable. 
\end{thm}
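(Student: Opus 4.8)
The plan is to deduce the three hypotheses of Theorem~\ref{Thrm:1} for the FIR multiplier $M(z)=\sum_{i=-n}^{n}m_i z^{-i}$ (with $m_0=1$) and then invoke that theorem directly. Two of the hypotheses are immediate: the constraint $\sum_{i=-n}^{n}|m_i|<2$ is precisely the Zames--Falb $\ell_1$ bound of Definition~\ref{def:zfm} (recall $\|M\|_1<2m_0=2$), and the hypothesis ``$m_i\le 0$ for all $i\neq0$ or $\phi$ is odd'' is exactly the sign/oddness hypothesis of Theorem~\ref{Thrm:1}. So the whole content of the proof is to show that the LMI \eqref{eq:7} is equivalent to the frequency-domain positivity \eqref{eq:14}.

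First I would observe that, because $G\in\mathbf{RH}_\infty$ and $\Psi(z)$ is a stack of causal delays $z^{-i}$ with $i\ge0$, the lifted transfer matrix $\Psi(z)\begin{bmatrix}-G(z)\\1\end{bmatrix}$ is causal and stable; hence $(\hat A,\hat B,\hat C,\hat D)$ is a genuine nonsingular realization to which the discrete KYP lemma applies, and its transfer function is $\hat C(zI-\hat A)^{-1}\hat B+\hat D=\Psi(z)\begin{bmatrix}-G(z)\\1\end{bmatrix}$. Comparing \eqref{eq:7} with Lemma~\ref{lem:kyp}, taken with the constant multiplier matrix $\begin{bmatrix}\hat C&\hat D\end{bmatrix}^{\top}\kappa(k,m)\begin{bmatrix}\hat C&\hat D\end{bmatrix}$, the strict form of the KYP lemma (which needs no controllability) shows that \eqref{eq:7} holds if and only if $\big(\Psi(z)\begin{bmatrix}-G\\1\end{bmatrix}\big)^{*}\kappa(k,m)\big(\Psi(z)\begin{bmatrix}-G\\1\end{bmatrix}\big)<0$ for all $|z|=1$.

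Next I would apply the lifting factorization $\Psi(z)^{*}\kappa(k,m)\Psi(z)=\left[\begin{smallmatrix}0&KM^{*}\\KM&-(M+M^{*})\end{smallmatrix}\right]$ displayed just before the theorem, which rewrites the previous quadratic form as $\begin{bmatrix}-G\\1\end{bmatrix}^{*}\left[\begin{smallmatrix}0&KM^{*}\\KM&-(M+M^{*})\end{smallmatrix}\right]\begin{bmatrix}-G\\1\end{bmatrix}$. Expanding this bilinear form gives $-\big(M+M^{*}+K(MG+M^{*}G^{*})\big)=-2\,\mathrm{Re}\{M(z)(1+KG(z))\}$ on $|z|=1$. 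Consequently the frequency-domain inequality obtained from the KYP step is equivalent to $\mathrm{Re}\{M(1+KG)\}>0$ for all $|z|=1$, which is exactly \eqref{eq:14}. With \eqref{eq:14} and the two constraint hypotheses established, Theorem~\ref{Thrm:1} yields $\ell_2$-stability of the interconnection~\eqref{eq:4}.

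The step I expect to be the main obstacle is the lifting factorization $\Psi(z)^{*}\kappa(k,m)\Psi(z)=\left[\begin{smallmatrix}0&KM^{*}\\KM&-(M+M^{*})\end{smallmatrix}\right]$ itself. One must verify that the specific sparse pattern and entries of $\kappa(k,m)$ in \eqref{eq:K} are exactly those for which the bilinear form in the collected delays $1,z^{-1},\dots,z^{-n}$ reassembles $M(z)=\sum_i m_i z^{-i}$ and $M^{*}(z)=\sum_i m_i z^{i}$ in the correct blocks; this is a careful coefficient-matching computation, together with the bookkeeping of the sign that links the quadratic form to $-2\,\mathrm{Re}\{M(1+KG)\}$. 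Everything downstream---the KYP equivalence and the reduction to Theorem~\ref{Thrm:1}---is then routine.
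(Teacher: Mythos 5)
Your proposal is correct and follows essentially the same route as the paper's (very terse) proof: apply the discrete KYP lemma (Lemma~\ref{lem:kyp}) to the lifted realization $(\hat{A},\hat{B},\hat{C},\hat{D})$ to show that the LMI \eqref{eq:7} is equivalent to the frequency-domain inequality \eqref{eq:14}, and then invoke Theorem~\ref{Thrm:1} together with the $\ell_1$ and sign/oddness constraints. Your write-up is in fact more explicit than the paper's, since you carry out the expansion $\bigl[\begin{smallmatrix}-G\\ 1\end{smallmatrix}\bigr]^{*}\Psi^{*}\kappa(k,m)\Psi\bigl[\begin{smallmatrix}-G\\ 1\end{smallmatrix}\bigr]=-2\,\mathrm{Re}\{M(1+KG)\}$ and thereby get the sign bookkeeping right (the strict LMI $<0$ yielding positivity of $\mathrm{Re}\{M(1+KG)\}$), which the paper leaves implicit.
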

\begin{proof}
	The proof follows by the application of the KYP lemma, as~\eqref{eq:7} is equivalent to \eqref{eq:14}; hence the conditions of Theorem~\ref{Thrm:1} hold, and stability is then guaranteed. 
\end{proof}
\begin{rem} Conditions for quasi-odd, quasi-monotone nonlinearities~\cite{Heath:15cdc} can be straightforwardly implemented. 
\end{rem}

\begin{rem} In this factorization, it is not possible to ensure $X>0$. The introduction of the condition $X>0$ would reduce the class of available multipliers.
\end{rem}

\subsection{Phase-Equivalence}

In the spirit of~\cite{Carrasco:13,Carrasco:14}, we can state the phase-equivalence between the full class of LTI Zames--Falb multipliers and FIR Zames--Falb multipliers as follows:

\begin{lem}\label{lem:pe}
	Given $P\in\mathbf{RH}_\infty$, if there exists a Zames--Falb multiplier $M$ such that
	\begin{equation}\label{eq:14a}
	\hbox{Re}\left\{M(z)P(z)\right\}>0 \quad \forall |z|=1,
	\end{equation}
	then there exists an FIR Zames--Falb multiplier $M_{\text{FIR}}$ such that
	\begin{equation}\label{eq:14b}
	\hbox{Re}\left\{M_{\text{FIR}}(z)P(z)\right\}>0 \quad \forall |z|=1.
	\end{equation}
\end{lem}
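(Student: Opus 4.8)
The plan is to obtain $M_{\text{FIR}}$ by truncating the (generally infinite) impulse response of $M$ to a finite window, and to show that for a sufficiently long window both the Zames--Falb constraints and the strict positivity~\eqref{eq:14b} are inherited from~\eqref{eq:14a}. The two facts I would exploit are: (i) the impulse response $m$ of any Zames--Falb multiplier is absolutely summable, indeed $\sum_k|m_k|<2m_0$ by Definition~\ref{def:zfm}, so its tails vanish and the series $M(z)=\sum_k m_k z^{-k}$ converges uniformly to a continuous function on $|z|=1$; and (ii) the inequality in~\eqref{eq:14a} is strict for a continuous function over the compact set $\{|z|=1\}$, hence holds with a uniform margin.

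First I would record that margin. Since $P\in\mathbf{RH}_\infty$ has no poles on the unit circle, $z\mapsto\mathrm{Re}\{M(z)P(z)\}$ is continuous on $|z|=1$, and strict positivity together with compactness yields a constant $\delta>0$ with $\mathrm{Re}\{M(z)P(z)\}\ge\delta$ for all $|z|=1$. Next, for each $N$ I would define the truncated multiplier $M_N(z)=\sum_{|k|\le N}m_k z^{-k}$, which is exactly of the FIR form~\eqref{M:definition} and keeps $m_0=1$. Writing $\|P\|_\infty=\sup_{|z|=1}|P(z)|<\infty$, on the circle one has
\[
|M(z)-M_N(z)|\le\sum_{|k|>N}|m_k|\xrightarrow[N\to\infty]{}0,
\]
the tail of a convergent series. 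Hence $\mathrm{Re}\{M_N(z)P(z)\}\to\mathrm{Re}\{M(z)P(z)\}$ uniformly on $|z|=1$, so for $N$ large enough $\mathrm{Re}\{M_N(z)P(z)\}\ge\delta/2>0$ everywhere on the circle, which is~\eqref{eq:14b}.

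It then remains to verify that $M_N$ is itself a Zames--Falb multiplier, and here truncation behaves well: $m_0$ is untouched, while $\sum_{|k|\le N}|m_k|\le\sum_k|m_k|<2m_0$, so the $\ell_1$ bound of Definition~\ref{def:zfm} is preserved for every $N$; moreover, in the non-odd case the sign pattern $m_k\le0$ for $k\ne0$ survives truncation, so the subclass constraint~\eqref{lemma1:case1a} is retained, whereas in the odd case only the $\ell_1$ bound is needed. Setting $M_{\text{FIR}}=M_N$ for such an $N$ completes the argument.

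I do not expect a serious obstacle; the only point requiring care is that $M$ is a priori merely an $\ell_1$-convolution operator and need not be rational, so the argument must be carried out at the level of the impulse response and its $\ell_1$-controlled uniform convergence on the circle, rather than through any state-space or $\mathbf{RL}_\infty$ machinery. Compactness of the unit circle together with absolute summability is precisely what lets the strict positivity margin survive the truncation.
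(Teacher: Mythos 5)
Your proposal is correct and follows essentially the same route as the paper's own proof: truncate the $\ell_1$ impulse response at $\pm N$, extract a uniform positivity margin $\delta$ via the extreme value theorem on the compact unit circle, and choose $N$ so the tail's $\ell_1$ mass (hence the $\mathbf{L}_\infty$ error times $\|P\|_\infty$) is below $\delta/2$. Your explicit check that the truncation remains a Zames--Falb multiplier (the $\ell_1$ bound and sign pattern survive truncation) is a point the paper leaves implicit, but otherwise the arguments coincide.
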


\begin{proof} Given an LTI Zames--Falb multiplier
	\begin{equation}
	M(z)=\sum_{i=-\infty}^{\infty}m_iz^{-i}, \quad \text{and}\quad\sum_{i=-\infty}^{\infty}|m_i|<2m_0,
	\end{equation}
	for any $\varepsilon>0$, there exists $N$ such that  
	\begin{equation}\label{N_ineq}
	\sum_{i=-\infty}^{-N-1}|m_i|+\sum_{i=N+1}^{\infty}|m_i|<\varepsilon.
	\end{equation}
	We can write
	\begin{equation}
	M(z)=\sum_{i=-N}^{N}m_iz^{-i}+M_t(z)=M_{\text{FIR}}(z)+M_t(z),
	\end{equation}
	with $\|M_t\|_\infty\leq\|M_t\|_1<\varepsilon$.
	
	Meanwhile, as $P(z)$ and $M(z)$ are continuous functions in the unit circle, by the extreme value theorem~\cite{Rudin:64}, there exists $\delta_1>0$ such that
	\begin{equation}\label{eq:14c}
	\hbox{Re}\left\{M(z)P(z)\right\}\geq\delta_1 \mbox{ for all } |z|=1.
	\end{equation}
	Let us choose $N$ such that (\ref{N_ineq}) is satisfied with $\varepsilon=\frac{\delta_1}{2\|P\|_\infty}$. Then for all $z$ satisfying $|z|=1$ we find
	\begin{align}
	\hbox{Re}\left\{M(z)P(z)\right\} &  =\hbox{Re}\left\{M_{\text{FIR}}(z)P(z)\right\}+\hbox{Re}\left\{M_t(z)P(z)\right\}\nonumber\\
	& \leq
	\hbox{Re}\left\{M_{\text{FIR}}(z)P(z)\right\}+|M_t(z)P(z)|\nonumber\\
	& \leq \hbox{Re}\left\{M_{\text{FIR}}(z)P(z)\right\}+|M_t(z)||P(z)|\nonumber\\
	& \leq
	\hbox{Re}\left\{M_{\text{FIR}}(z)P(z)\right\}+\|M_t\|_\infty\|P\|_\infty\nonumber\\
	& \leq \hbox{Re}\left\{M_{\text{FIR}}(z)P(z)\right\}+\frac{\delta_1}{2},\label{eq:14e}
	\end{align}
	Finally, rearranging using~\eqref{eq:14e} and using~\eqref{eq:14a}, it follows that
	\begin{align}
	\hbox{Re}\left\{M_{\text{FIR}}(z)P(z)\right\} & \geq \hbox{Re}\left\{M(z)P(z)\right\}-\frac{\delta_1}{2}\nonumber\\
	& \geq\frac{\delta_1}{2}>0 \mbox{ for all } |z|=1.\label{eq:14d}
	\end{align}
\end{proof}


\section{Numerical results}

\subsection{Comparison with other results}


Table I presents the numerical examples that we analyse. All six plants are taken from previous papers~\cite{Ahmad:15,Heath:15}. Results are shown in Table II. We have run results in Theorem~\ref{main:theorem} for values of $n=n_b=n_f$ between 1 and 100, and optimal results are presented in Table II indicating $n^*$ the optimal value of n.

The FIR search is significantly better than all competitive results in the literature, it beats classical searched as the Tsypkin Criterion~\cite{Tsypkin:1962,Kapila:96} as well as the most recent result in the Lyapunov literature~\cite{Ahmad:15,Park:2018}. It is worth highlighting that these Lyapunov methods correspond with particular cases of FIR Zames--Falb multipliers, besides small numerical discrepancies. Results~\cite{Ahmad:15} corresponds with the case $n_b=n_f=1$, whereas results in~\cite{Park:2018} correspond with the case $n_b=n_f=2$, besides small numerical discrepancies. Results have been obtained by using CVX~\cite{gb08,cvx} with the SeDuMi solver~\cite{Sturm:99}. 

Roughly speaking, the higher the order of the multiplier, the better the results. However, there is a small deterioration due to numerical issues as $n$ increase. We show that the maximum slope suffers also a small deterioration as $n$ increases by including the values of the maximum slope with $n_b=nf=100$. Figure~\ref{figure:6} shows this deterioration as $n$ increases for Example 1.  We associate this deterioration to the numerical error associated with an increment in the size of the matrices in the LMIs.  

\begin{figure}[htbp]
	\centering
	\includegraphics[width=\linewidth]{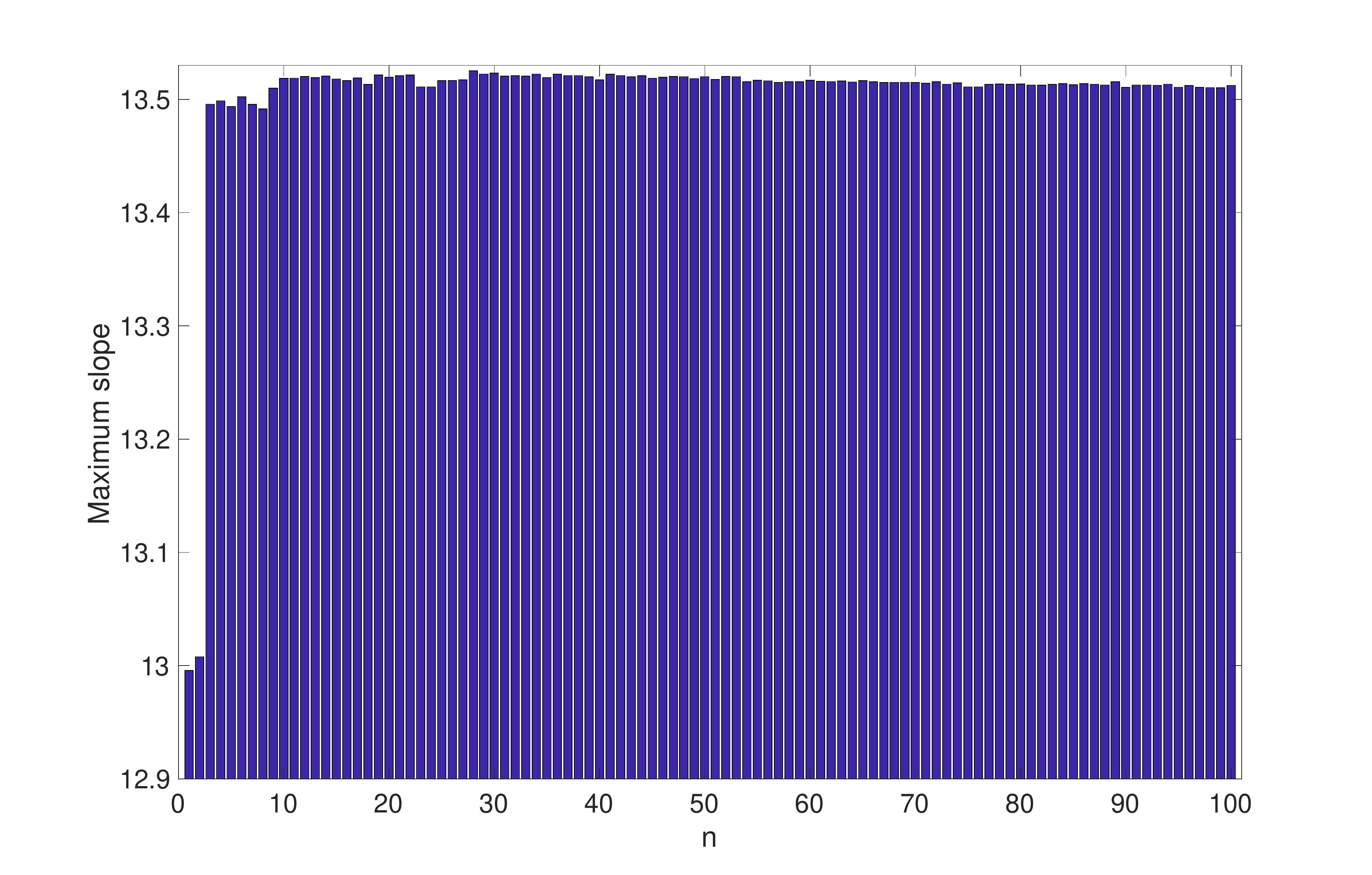}
	\caption{Maximum slope for Example 1 as $n=n_f=n_b$ increases.}\label{figure:6}
\end{figure}

There are small numerical differences between results with both factorizations. In general, there is a slightly better performance of the hard factorization presented in Section~IV.A.  For instance, maximum slope in Example 1 is 13.5215 with $n_f=n_b=28$, whereas the soft factorization in Section IV.B reaches 13.5162 with $n=11$. Similar deterioration is observed, maximum slope is reduced to 13.5001 when $n=100$.

As expected, results for odd nonlinearities are always better than results for non-odd nonlinearities. Although it is natural as the set of multiplier increase and phase retrictitions are reduced, this contrasts with the SISO results reported in \cite{chen95a} for the continuous case. In Examples~1 to~4 the FIR results beat all others in the literature. In Example~5 both the FIR results and others in the literature achieve the Nyquist value. Example 6 is used in~\cite{Heath:15} to show that stability is deteriorated by the lack of symmetry. From~\cite{Heath:15}, we expect that a maximum slope above 1 for odd nonlinearities and below 1 for non-odd nonlinearities.  



\begin{table}[t]
	\centering  \caption{Examples}\label{t:1}\vskip-2mm
	\small{\begin{tabular}{|l|l|}
			\hline
			Ex.                & Plant   \\ \hline \hline
			1~\cite{Ahmad:15} & $G_1(z)=\frac{0.1z}{z^2-1.8z+0.81}$ \\
			2~\cite{Ahmad:15}           & $G_2(z)=\frac{z^3 - 1.95 z^2 + 0.9 z + 0.05}{z^4 - 2.8 z^3 + 3.5 z^2 - 2.412 z + 0.7209}$ \\
			3~\cite{Ahmad:15}           & $G_3(z)=-\frac{z^3 - 1.95 z^2 + 0.9 z + 0.05}{z^4 - 2.8 z^3 + 3.5 z^2 - 2.412 z + 0.7209}$ \\
			4~\cite{Ahmad:15}          & $G_4(z)=\frac{ z^4 - 1.5 z^3 + 0.5 z^2 - 0.5 z + 0.5}{4.4 z^5 - 8.957 z^4 + 9.893 z^3 - 5.671 z^2 + 2.207 z - 0.5}$ \\
			5~\cite{Ahmad:15}          & $G_5(z) =\frac{-0.5 z + 0.1}{z^3 - 0.9 z^2 + 0.79 z + 0.089}$\\
			6~\cite{Heath:15}          & $G_6(z) =\frac{2z+0.92}{z^2-0.5z}$\\
			\hline
		\end{tabular}
		\vskip-2mm}
\end{table}

\begin{table*}[t]
	\centering\caption{Slope-restricted results by using different stability criteria.}\vskip-2mm
	\begin{tabular}{ |l|l|c|c|c|c|c|c|}
		\hline
		\textbf{Criterion }                              & Odd nonlinearity? &  Ex. 1   & Ex. 2  &Ex. 3      &Ex. 4       & Ex. 5 &Ex. 6 \\ \hline
		Circle Criterion \cite{Tsypkin:1962}  & N          & 0.7934 & 0.1984 &  0.1379  &  1.5312    &    1.0273  & 0.6510  \\  \hline
		Tsypkin Criterion \cite{Kapila:96}     & N        & 3.8000  & 0.2427 &  0.1379  &   1.6911    &   1.0273  & 0.6510  \\  \hline
		Ahmad et. al. (2015), Thm 1 \cite{Ahmad:15}   & N    & 12.4178 & 0.72614 &  0.30267  &    2.5911  & 2.4475 & 0.9067   \\  \hline
		Park et al. (2018)& N & 12.9960 & 0.7396 & 0.3054 & 2.5904 &  2.4475& 0.9108 \\ \hline \hline
		Causal DT Zames--Falb     & Y   & 12.4355  & 0.7687  & 0.2341   & 3.3606      &  2.3328 & 0.9222 \\  \hline	
		Anticausal DT Zames--Falb  & Y  & 1.4994   & 0.4816  & 0.3058    &   3.2365     & 2.4474& 1.0869  \\  \hline
		FIR Zames--Falb ($n_f=1$, $n_b=1$)          & N                   	& 12.9957      & 0.7397     &    0.3054     &     2.5904      &   2.4475 & 0.9108  \\  \hline
		FIR Zames--Falb ($n_f=1$, $n_b=1$)          & Y                   	&  12.9957    &   0.7783    &   0.3076     &     3.1350       &   2.4475& 1.0869  \\ \hline
		FIR Zames--Falb ($n_f=2$, $n_b=2$)          & N                   	& 12.9957      & 0.7397     &    0.3054     &     2.5904      &   2.4475 & 0.9115 \\  \hline
		FIR Zames--Falb ($n_f=2$, $n_b=2$)          & Y                   	&  12.9957    &   0.7783    &   0.3076     &     3.1350       &   2.4475&  1.0869  \\ \hline
		FIR Zames--Falb ($n_f=100$, $n_b=100$)          & N                   	&   13.0280   &  0.7948   &   0.3113   &    3.8234      &   2.4475 &  0.9115 \\  \hline
		FIR Zames--Falb ($n_f=100$, $n_b=100$)          & Y                   	&   13.5124   &    1.1047   &  0.3115    &     3.8196     & 2.4469  &  1.0849  \\ \hline
		FIR Zames--Falb  ($n_f=n_b=n^*$)         & N                   	& 13.0284 ($6$)   & 0.8015 ($12$)  & 0.3120 ($12$)   &  3.8240 ($24$)     &  2.4475 ($1$) & 0.9115 ($2$) \\  \hline
		FIR Zames--Falb        ($n_f=n_b=n^*$)   & Y                   	& 13.5251 ($28$)  & 1.1073 ($7$)  & 0.3126 ($4$)    &   3.8304 ($7$)    &  2.4475 ($1$)& 1.0869 ($1$) \\  \hline \hline
		Nyquist Value                            			  & N/A    & 36.1000   & 2.7455    & 0.3126     & 7.9070       &  2.4475   & 1.0870 \\  \hline	
	\end{tabular}
	\label{t:2}\vskip-1mm
\end{table*}

\subsection{Structure of Multipliers}

It is worth highlighting the sparsity in the structure of the multiplier. In Figure~\ref{figure:5}, we show the terms above $10^{-5}$. The structure of the multiplier can be explained as it reaches it maximum allowed phase over some particular range of frequencies when it has an sparse structure~\cite{Wang:18}, therefore the optimization use only the positions in the multiplier which are useful to correct the phase of the $(1+kG)$ in the region when it is not positive.

\begin{figure}[htbp]
	\centering
	\includegraphics[width=\linewidth]{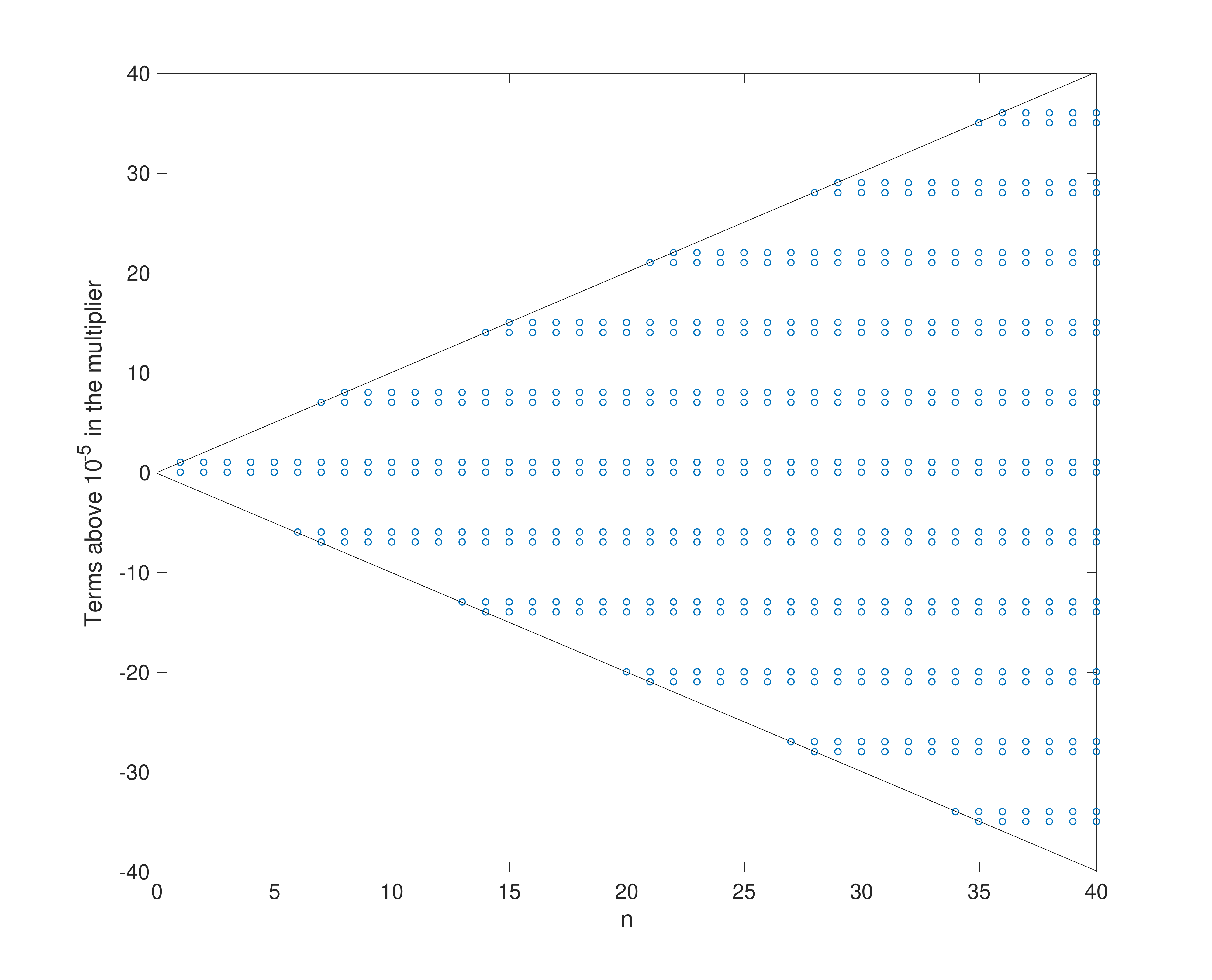}
	\caption{Pattern of the significant terms of the multipliers for Example 1 as $n=n_f=n_b$ increases.}\label{figure:5}
\end{figure}

\subsection{Computational time}


It is interesting to analyse the performance of the search as $n$ increases. As expected, the computational time increases in a polynomial fashion. However, it is worth highlighting that the use of the Jordan measure decomposition in \eqref{lemma1:case2b} increases the computational time as the number of variables in the multiplier is doubled. The code is run in MATLAB R2017a with Mac Book Pro 2.3 GHz Intel Core i5 and 8GB 2133 MHz LPDDR3.

\begin{figure}[htbp]
	\centering
	\includegraphics[width=\linewidth]{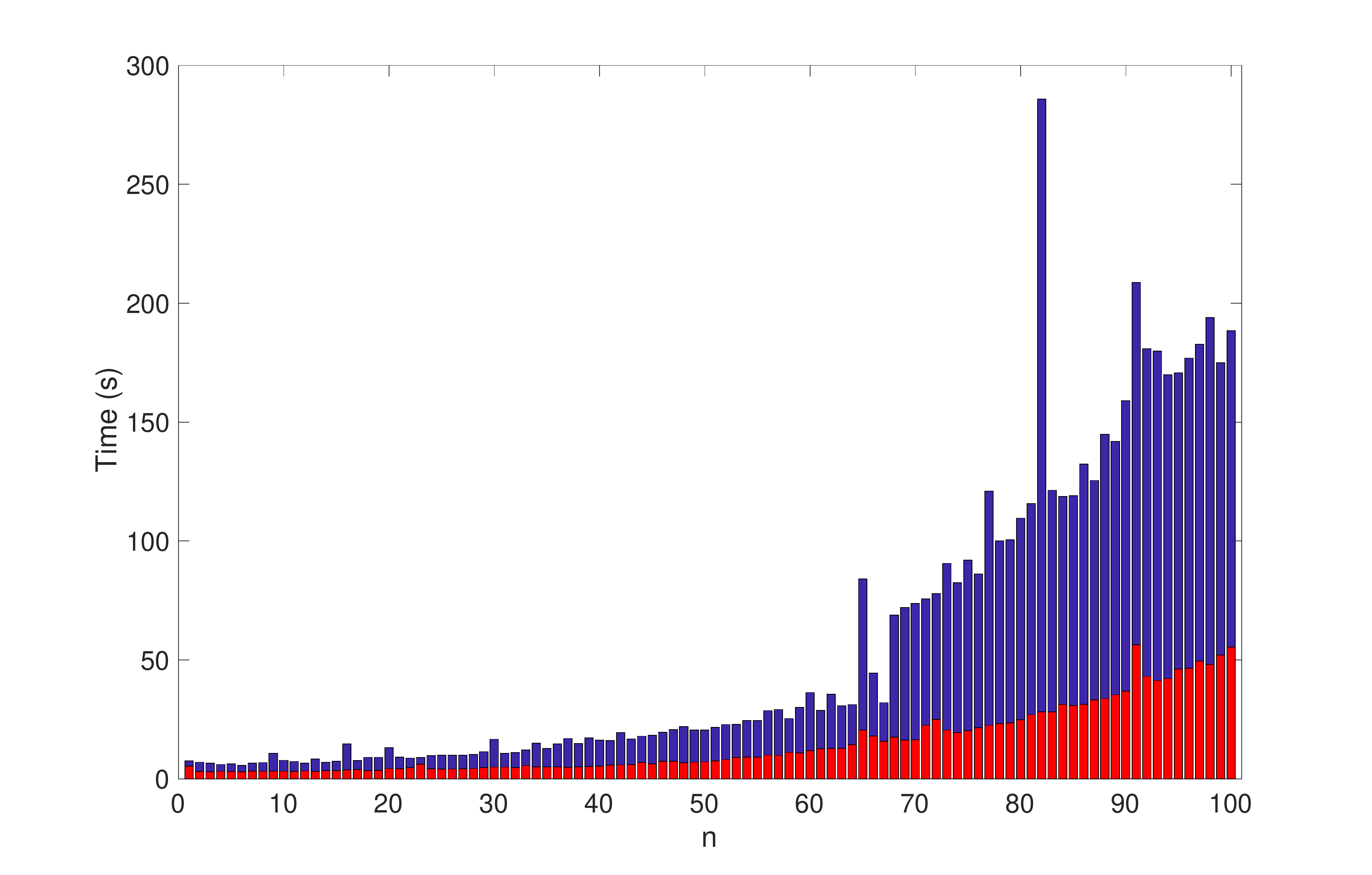}
	\caption{Computational time require to find the maximum slope in Example 1 with a precision of $10^{-5}$ in the bisection algorithm. The bisection method is started with $k_{\min}=0$ and $k_{max}=1.1 k_{N}$. The case $m_i\leq0$ in red (slope-restricted nonlinearities), and the in blue the most general class of multipliers (slope-restricted and odd nonlinearities). }
\end{figure}

\section{Application to Safonov's method}

Safonov proposed the first numerical method to search for Zames--Falb multipliers~\cite{Safonov87}. Different modifications have been proposed~\cite{Gapski:94,Chang:2012} to produce numerical optimization of the multiplier. In this section, we provide a different approach, which require manual tuning from the user, but may be used to test the conservatism of fully-autonomous numerical searches. Note that other manual tunings of rational multipliers have been suggested in the literature~\cite{Veenman:16,Tugal:17}, which also lead to improvements over fully-autonomous searches.

The idea is straightforward. Given a continuous plant $G(s)$ we find the maximum slope as follows:
\begin{enumerate}
	\item Choose a sampling time $T_s$ and find the discrete-time counterpart $G_d(z)$.
	\item Choose $n_f$ and $n_b$. Find the discrete-time Zames--Falb multiplier
	\[M_d(z) = \sum_{i=-n_f}^{n_b} m_i z^{-i},\]
	corresponding to the maximum $K_d$ such that
	\[\mbox{Re}\left \{M_d(z)(1+K_dG_d(z))\right \}>0 \mbox{ for all }|z|=1.\]
	\item (Optional) Choose $\epsilon>0$. For $-n_f\leq i\leq n_b$, if $|m_i|<\epsilon$, set $m_i=0$ for tractability.
	\item Define
	\[M(s) = \sum_{i=-n_f}^{n_b} m_i e^{-iT_s s}.\]
	It follows immediately that $M(s)$ belongs to the appropriate class of Zames--Falb multipliers. 
	\item Find the maximum $K$ such that
	\[\mbox{Re}\left \{M(s)(1+KG(s))\right \}>0 \mbox{ for all }\mbox{Re}\left \{s\right \}=0.\]
\end{enumerate}

\subsection*{Numerical results}
All the following results are taken from~\cite{Carrasco:14} and given in Table~\ref{tab:1}. Here we just provide details of the suitable multiplier obtained by the above method. We have used standard command in MATLAB \emph{c2d} to perform the discretization.
We use $\epsilon=10^{-3}$ in Step 3. A summary of the results is given in Table~\ref{tab:6}, but we provide detailed information for each example.
%

\begin{table}
	\begin{tabular}{|l|l|}
		\hline
		Ex. & $G(s)$\\
		\hline
		\hline
		
		1 & $G_1(s)=\frac{s^2-0.2s-0.1}{s^3+2s^2+s+1}$\\
		
		2 & $G_2(s)=-G_1(s)$\\
		
		3 & $G_3(s)=\frac{s^2}{s^4+0.2s^3+6s^2+0.1s+1}$\\
		
		4 & $G_4(s) = -G_3(s)$\\
		
		
		5 & $G_5(s) = \frac{s^2}{s^4+0.0003s^3+10s^2+0.0021s+9}$\\
		
		6 & $G_6(s) = -G_5(s)$\\
		7 & $G_7(s) = \frac{s^2}{s^3+2s^2+2s+1}$\\
		
		
		8 & $G_8(s) =  \frac{9.432(s^2+15.6s+147.8)(s^2+2.356s+56.21)(s^2-0.332s+26.15)}{(s^2+2.588s+90.9)(s^2+11.79s+113.7)(s^2+14.84s+84.05)(s+8.83)}$\\
		
		
		9 & $G_9(s) = \frac{s^2}{s^4+5.001s^3+7.005s^2+5.006s+6}$\\
		
		\hline
	\end{tabular}
	\vskip2mm
	\caption{Continuous-time examples from~\cite{Carrasco:14}}\label{tab:1}
\end{table}

\begin{table*}
	\centering
	\begin{tabular}{|c|c|c|c|c|c|c|c|c|c|}
		\hline
		& Ex.1 & Ex. 2 & Ex. 3 & Ex. 4 & Ex. 5 & Ex. 6 & Ex. 7 & Ex. 8 & Ex. 9\\
		\hline \hline
		Best results in~\cite{Carrasco:14}  & 4.5949 & 1.0894 & 1.6122& 1.2652 & 0.00333 & 0.00333 & {\bf 10,000+} & 87.3854 & 91.0858\\
		\hline
		Algorithm in Section VI    & 4.5949 & 1.0894 & 1.945 & 1.29 & 0.0055 & 0.0039 & Unreliable & Unreliable & 91.0858\\
		\hline \hline
		Nyquist value & 4.5894 & 1.0894 & $\infty$ & 3.5000 & $\infty$ & 1.7142 & $\infty$ & 87.3854 & $\infty$\\
		\hline
	\end{tabular}
\vskip 2mm
\caption{Comparison between best results reported in~\cite{Carrasco:2012a} and continuous time method in Section VI.}\label{tab:6}
\end{table*}

\paragraph{Example 1}
Choose $T_s=0.05$, $N_f=1$, $N_b=1$. The discrete search leads then to the continuous-time multiplier given by 
\[M(s) = -0.5436 e^{0.05s} +1 -0.4561e^{-0.05s}.\]
The multiplier reaches the Nyquist value in this example (K=4.5984) which matches the best results reported in~\cite{Carrasco:2012a}.

\paragraph{Example 2}
Choose $T_s=0.05$, $N_f=0$, $N_b=1$. The discrete search leads then to the continuous-time multiplier given by  
\[M(s) = 1 -0.9551e^{-0.05s}.\]
The multiplier reaches the Nyquist value in this example (K=1.0894) which matches the best results reported in~\cite{Carrasco:2012a}. 

\paragraph{Example 3}
Choose $T_s=0.1$, $N_f=20$, $N_b=0$. The discrete search leads then to the continuous-time multiplier given by 
\[M(s) = 1 -0.6507e^{1.9s}-0.3493e^{2s}.\]
The multiplier reaches $K=1.945$, a 21\% improvement over the best results reported in~\cite{Carrasco:2012a}.

\paragraph{Example 4}
Choose $T_s=0.02$, $N_f=1$, $N_b=80$. The discrete search leads then to the continuous-time multiplier given by  
\[M(s) = -0.9186 e^{0.02s}+1-0.0809e^{-1.6s}.\]
The multiplier reaches $K=1.29$, a 2\% improvement over the best results reported in~\cite{Carrasco:2012a}.

\paragraph{Example 5}
Choose $T_s=0.02$, $N_f=0$, $N_b = 50$. The discrete search leads then to the continuous-time multiplier given by   
\[M(s) = 1 - 0.8902e^{-0.02s}+0.1087e^{-s}.\]
The multiplier reaches $K=0.0055$, a 65\% improvement over the best results reported in~\cite{Carrasco:2012a}.

\paragraph{Example 6}
Choose $T_s=0.02$, $N_f=50$, $N_b = 0$. The discrete search leads then to the continuous-time multiplier given by   
\[M(s) = 1 - 0.7909e^{0.02s}+0.2090e^{s}.\]
The multiplier reaches $K=0.0039$, a 20\% improvement over the best results reported in~\cite{Carrasco:2012a}.

\paragraph{Example 7}
For this example the method is poor. We must sample at $T_s<0.0002$ to achieve a Nyquist value of over 10,000. But with $T_s$ so small, we require $N_f$ and $N_b$ intractably large to obtain good multipliers. For example, choosing $T_s=0.0001$, and $N_f=N_b=50$ gives a maximum $k=  28.6$. By contrast, setting $T_s=0.001$ gives a maximum $k= 768$.  Setting $T_s=0.01$ sets it back to $k=147$.

\paragraph{Example 8}
Again for this example the method is poor. Extreme care must be taken when discretizing the model. Setting $T_s=0.001$ and $N_b=N_f=40$ yields a maximum $k=64$. Other methods yield the Nyquist value, which is circa 87.

\paragraph{Example 9}
Choose $T_s=0.01$, $N_f=70$, $N_b = 1$. The discrete search leads then to the continuous-time multiplier given by  
\begin{equation}\label{mult9}
M(s)=1-0.976e^{-0.01s}-0.0013e^{0.48s}-0.0227e^{0.7s}.
\end{equation} 
The multiplier reaches $K=360$, a 395\% improvement over the best results reported in~\cite{Carrasco:2012a}. Figure~\ref{fig:6} shows that the phase of $M(s)(1+360G_9(s))$ is in the interval $(-90,90)$.

\begin{figure}[htbp]
	\centering
	\includegraphics[width=\linewidth]{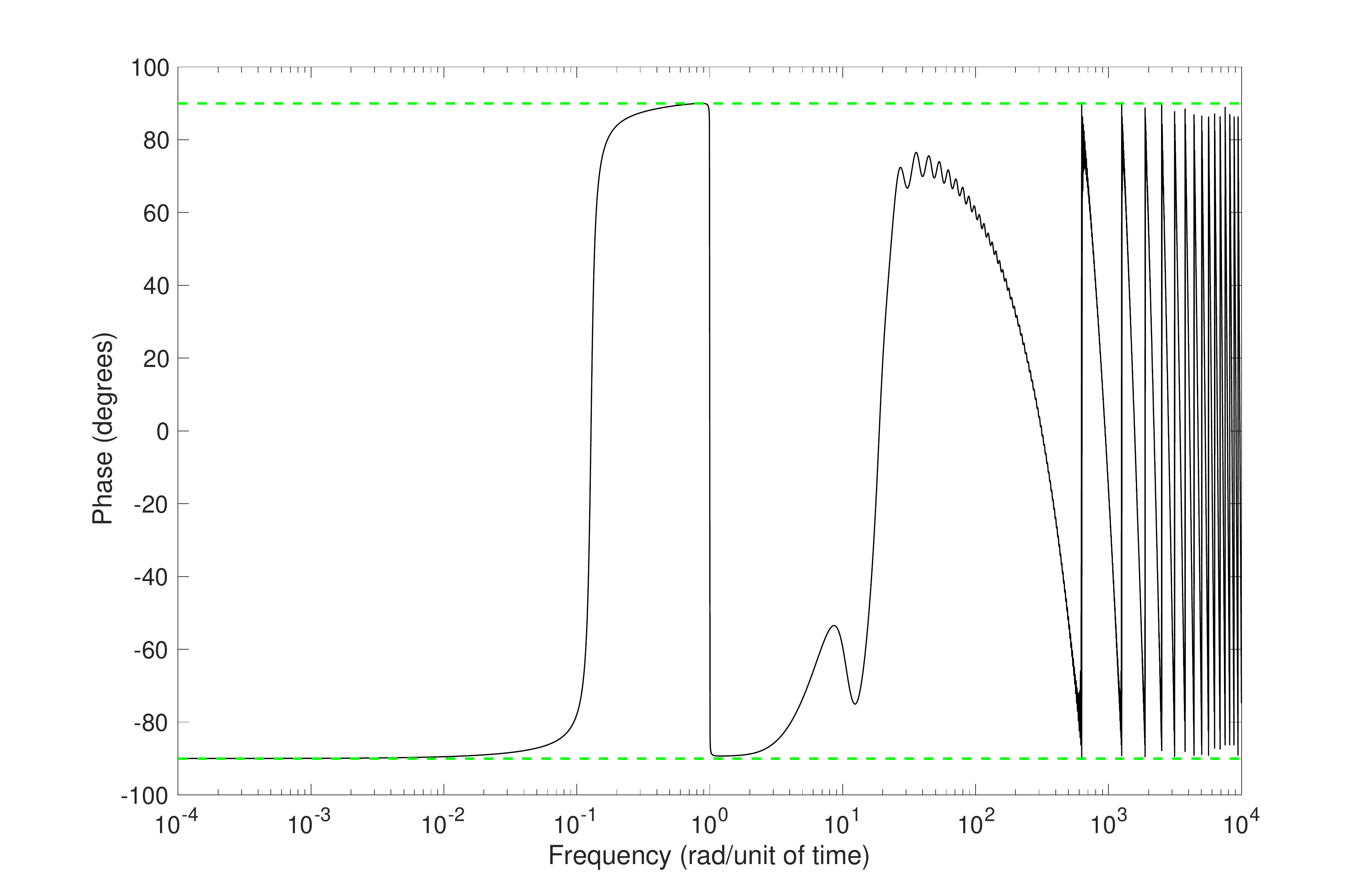}
	\caption{Phase of $M(s)(1+360 G_9(s))$ where $M(s)$ is given by~\eqref{mult9}.}\label{fig:6}
\end{figure}

%
%


\section{Conclusions}

The results in this paper provide the best results in the literature for absolute stability of discrete-time LTI systems in feedback interconnection with slope-restricted nonlinearities. We have developed two search methodologies for discrete-time Zames--Falb multiplier: IIR and FIR. In contrast with continuous-time domain, one of the available searches is better for all examples. We show the superiority of these searches with respect to the recent method based on Lyapunov functions, whose results are similar to our search with $n_b=n_f=2$. Finally, we have extended the results to be used as a tunable search of continuous time Zames--Falb multipliers. The results shows the conservativeness of current state-of-the-art searches over the class of Zames--Falb multipliers.

\end{document}